\newtheorem{prop}{Proposition}[section]
\numberwithin{equation}{section}
\newcommand{\abs}[1]{\left\vert#1\right\vert}
\title{\bf On Quasi-static Cloaking Due to Anomalous Localized Resonance in $\mathbb{R}^3$}
\author{
Hongjie Li\thanks{School of Mathematics and Statistics, Beijing Institute of Technology, Beijing, 100081, P.~R.~China.}
\and
Jingzhi Li\thanks{Department of Mathematics, South University of Science and
Technology of China, Shenzhen 518055, P.~R.~China. Email: {\tt li.jz@sustc.edu.cn}}
\and Hongyu Liu\thanks{Department of Mathematics, Hong Kong Baptist University, Kowloon Tong, Hong Kong SAR, and HKBU Institute of Research and Continuing Education, Virtual University Park, Shenzhen, P. R. China.   Email:  {\tt hongyu.liuip@gmail.com}} }
\begin{document}

\maketitle

\begin{abstract}
This work concerns the cloaking due to anomalous localized resonance (CALR) in the quasi-static regime. We extend the related two-dimensional studies in \cite{Ack13,Klsap} to the three-dimensional setting. CALR is shown not to take place for the plasmonic configuration considered in \cite{Ack13,Klsap} in the three-dimensional case. We give two different constructions which ensure the occurrence of CALR. There may be no core or an arbitrary shape core for the cloaking device. If there is a core, then the dielectric distribution inside it could be arbitrary.
\end{abstract}

\begin{keywords}
anomalous localized resonance, plasmonic material, invisibility cloaking
\end{keywords}

\begin{AMS}
35R30, 35B30
\end{AMS}

\pagestyle{myheadings}
\thispagestyle{plain}
\markboth{H. LI, J. LI AND H. LIU}{Cloaking Due to Anomalous Localized Resonance}

\section{Introduction}

This work concerns the invisibility cloaking due to anomalous localized resonance (CALR) in the quasi-static regime, which has gained growing interest in the literature; see \cite{Acm13,Ack13,Ack14,AK,Bos10,Brl07,CKKL,KLO,Klsap,Min06,Nmm94} and the references therein. Let $\Sigma$ and $\Omega$ be bounded domains in $\mathbb{R}^d$, $d=2, 3$, such that $\Sigma\Subset \Omega$. $\Sigma$, $\Omega\backslash\overline{\Sigma}$ and $\mathbb{R}^d\backslash\overline{\Omega}$ signify, respectively, the core, shell and matrix of a cloaking device, which hosts a dielectric object as follows
\begin{equation}\label{eq:struc}
   \epsilon(x) = \begin{cases}
   \epsilon_c(x),\quad & x\in\Sigma,\\
   \epsilon_s(x),\quad & x\in \Omega\backslash\overline{\Sigma},\\
   \epsilon_m(x),\quad & x\in\mathbb{R}^d\backslash\overline{\Omega}\,.
   \end{cases}
\end{equation}
In most of the existing studies, one takes $\epsilon_m\equiv 1$ and $\epsilon_c\equiv 1$. $\epsilon_s$ is negatively valued, which denotes the plasmonic material parameter. Let $\eta\in\mathbb{R}_+$ denote the loss parameter and consider a material distribution given as
\begin{equation}\label{eq:d1}
\epsilon_\eta(x)=\epsilon(x)+i\eta \chi_D(x),\quad x\in\mathbb{R}^d,
\end{equation}
where $\epsilon$ is given in \eqref{eq:struc}, and $\chi$ denotes the characteristic function of the domain $D$, with $D=\mathbb{R}^d$ or $D=\Omega\backslash\overline{\Sigma}$. For time-harmonic wave propagation in the quasi-static regime, the wave pressure $u(x)=u_\eta(x)\in\mathbb{C}$ satisfies the following equation
\begin{equation}\label{eq:m1}
\begin{cases}
\nabla_x\cdot\left(\epsilon_\eta(x)(\nabla_x u_\eta(x))\right)=f(x),\qquad & x\in\mathbb{R}^d,\\
u_\eta(x)=  o(1),\quad d=2;\ \ \mathcal{O}(|x|^{-1}),\quad d=3,\qquad & |x|\rightarrow +\infty,
\end{cases}
\end{equation}
where $f(x)$ denotes a source term that is compactly supported in $\mathbb{R}^d\backslash\overline{\Omega}$ and satisfies
\begin{equation}\label{eq:m2}
\int_{\mathbb{R}^d} f(x)\ dx=0.
\end{equation}
Define
\begin{equation}\label{eq:m3}
E_\eta=E_\eta(u_\eta, \epsilon_\eta, f):=\int_{D}\frac{\eta}{2}|\nabla_x u_\eta(x)|^2\ dx,
\end{equation}
where $u_\eta$ is the solution to \eqref{eq:m1}. $E_\eta$ denotes the rate at which the energy of the wave field is dissipated into heat. Then anomalous localized resonance (ALR) is said to occur if there holds
\begin{equation}\label{eq:cond1}
E_\eta\rightarrow +\infty\quad\mbox{as}\ \ \eta\rightarrow +0.
\end{equation}
In what follows, we sometimes simply refer to ALR as resonance. If in addition to \eqref{eq:cond1}, one further has that
\begin{equation}\label{eq:cond2}
|u_\eta(x)/\sqrt{E_\eta}|\rightarrow 0\quad\mbox{as}\ \ \eta\rightarrow +0\quad \mbox{when}\ \ |x|>a,
\end{equation}
where $a\in\mathbb{R}+$ is such that the central ball $B_a$  contains $\Omega$, then it is said that CALR occurs. Here and also in what follows, $B_\tau$ with $\tau\in\mathbb{R}_+$ denotes a central ball of radius $\tau$ in $\mathbb{R}^d$, $d\geq 2$. By \eqref{eq:cond2}, it is readily seen that if CALR occurs, then both the source $f$ and the cloaking device $(\Omega; \epsilon_\eta)$ are invisible to the wave observation made from the outside of $B_a$. If \eqref{eq:cond1} is replaced by
\begin{equation}\label{eq:cond3}
\limsup_{\eta\rightarrow +0} E_\eta=+\infty,
\end{equation}
then it is said that weak CALR occurs. We refer to \cite{Ack13}, \cite{AGJKLSW} and \cite{Klsap} for more discussions on the anomalous localized resonance and its connection to invisibility cloaking.

The anomalous localized resonance phenomenon was first observed in \cite{Nmm94} and connected to invisibility cloaking in \cite{Min06}. Recently, a mathematical theory was developed in \cite{Ack13} by Ammari et al to rigorously explain the CALR observed in \cite{Nmm94} and \cite{Min06}. In their study, $d=2$ and $(\Omega; \epsilon_\eta)$ is given by \eqref{eq:d1} with
\begin{equation}\label{eq:ammari}
\epsilon_m\equiv 1, \quad \epsilon_c\equiv 1,\quad \epsilon_s\equiv -1\quad\mbox{and}\quad D=\Omega\backslash\overline{\Sigma}.
\end{equation}
Moreover, they let $\Omega=B_{r_e}$ and $\Sigma=B_{r_i}$, with $0<r_i<r_e$.  For the above plasmonic configuration, the solution $u_\eta$ to \eqref{eq:m1} in \cite{Ack13} was shown to have a spectral representation associated with a Neumann-Poincar\'e-type operator. Using the spectral representation, it is shown that there exists a critical radius
\begin{equation}\label{eq:critical}
r^*=\sqrt{r_e^3/r_i},
\end{equation}
such that when a generic source term $f$ lies within $B_{r^*}\backslash B_{r_e}$, then CALR occurs; and when $f$ lies outside $B_{r^*}$, then CALR does not occur. Here and also in what follows, when we say that CALR does not occur, it actually means that weak CALR does not occur. Later on, the CALR was considered from a variational perspective in \cite{Klsap} by taking $d=2$ and
$(\Omega; \epsilon_\eta)$ in \eqref{eq:d1} with
\begin{equation}\label{eq:kohn}
\epsilon_m\equiv 1, \quad \epsilon_c\equiv 1,\quad \epsilon_s\equiv -1\quad\mbox{and}\quad D=\mathbb{R}^2,\quad \Omega=B_{r_e}.
\end{equation}
By using the primal and dual variational principles, it is shown in \cite{Klsap} that for a large class of sources, if $\Sigma=\emptyset$, then resonance always occurs; whereas if $\Sigma\neq \emptyset$ and $\Sigma\subset B_{r_i}$, then there exists the critical radius $r^*$ in \eqref{eq:critical} for the occurrence and nonoccurrence of resonance.

The aim of this work is to extend the related results in \cite{Ack13,Klsap} to the three dimensional setting. Indeed, the three-dimensional CALR was considered in the literature and the situation becomes much more complicated. In \cite{Ack14}, it is shown that if one takes a similar structure as that in \eqref{eq:ammari} but with $d=3$, then CALR does not occur. The same conclusion was draw in \cite{KLO} without the quasi-static approximation. In \cite{Acm13}, the anomalous localized resonance is shown to take place by using a folded geometry where the plasmonic material $\epsilon_s$ is spatially variable. In this paper, we show that CALR does not occur for the configuration \eqref{eq:kohn} in $\mathbb{R}^3$. Then, we show that by properly choosing the plasmonic parameters, CALR can still happen, at least approximately. We follow both the spectral and variational arguments developed, respectively, in \cite{Ack13} and \cite{Klsap}.

The rest of the paper is organized as follows. In Sections 2, by using the variational argument, we show the nonoccurrence and occurrence of ALR by taking the loss parameter to be given over the whole space $\mathbb{R}^3$. In Section 3, by following the relevant study in \cite{Ack13}, we consider the occurrence and nonoccurrence of CALR by taking the loss parameter to be given only in the plasmonic layer.

\section{Variational perspective on ALR in three dimensions}\label{sect:2}

Henceforth, we let $\epsilon_m\equiv 1$ and $\eta\in\mathbb{R}_+$ be a constant. Throughout the present section, we let
the source term $f$ be a real-valued distributional functional such that it is supported at a distance $q$ from the origin, and has a zero mean :
\begin{equation}\label{eq:source}
f=F\, \mathcal{H}^2 \lfloor \partial B_q, \quad F: \partial B_q \rightarrow \mathbb{R},\ \ F \in L^2(\partial B_q)\ \ \mbox{and}\ \ \int_{\partial B_q} F d \mathcal{H}^2=0,
\end{equation}
where $\mathcal{H}^2\lfloor\partial B_q$ denotes the two-dimensional Hausdorff measure restricted to the set $\partial B_q$. Moreover, in this section, we let
\begin{equation}\label{eq:d2}
\mbox{$(\Omega; \epsilon_\eta)$ be given by \eqref{eq:d1} with $\Omega=B_{r_e}$ and $D=\mathbb{R}^3$, }
\end{equation}
and without loss of generality, it is assumed that $r_e>1$ and $r_i=1$. Indeed, the subsequent results derived in this section can be easily extended to the general case $0<r_i<r_e<+\infty$ by a direct scaling argument. For the solution $u_\eta\in\mathbb{C}$ to \eqref{eq:m1} with $\epsilon_\eta$ given in \eqref{eq:d2}, we set
\begin{equation}\label{eq:decom}
  u_{\eta} = v_{\eta} +i \frac{1}{\eta}w_{\eta} \quad \mathrm{with} \quad  v_{\eta},w_{\eta}: \mathbb{R}^3 \rightarrow \mathbb{R}.
\end{equation}
It is straightforward to verify that
\begin{align}
&\nabla \cdot ( \epsilon \nabla v_{\eta}) -  \triangle w_{\eta}  = f\qquad\ \ \mbox{in\ \ } \mathbb{R}^3,\label{eq:change1}\\
&\nabla \cdot ( \epsilon \nabla w_{\eta}) + \eta^2 \triangle v_{\eta}  = 0\qquad\mbox{in\ \ } \mathbb{R}^3.\label{eq:change2}
\end{align}
Accordingly, the energy $E_{\eta}(u_{\eta})$ can be represented with $v_{\eta}$ and $w_{\eta}$ as
\begin{equation}
 E_{\eta}(u_{\eta}) = \frac{\eta}{2} \int_{\mathbb{R}^3} |\nabla u_{\eta}|^2 =
 \frac{\eta}{2} \int_{\mathbb{R}^3} |\nabla v_{\eta}|^2 +  \frac{1}{2\eta} \int_{\mathbb{R}^3} |\nabla w_{\eta}|^2\,.
\end{equation}

The following variational principles were proved in \cite{Klsap} when $d=2$, and can be extended to the three-dimensional case for the present study by straightforward modifications. Define
\begin{equation}\label{eq:v1}
I_\eta(v, w):=\frac{\eta}{2}\int_{\mathbb{R}^3} |\nabla v|^2+\frac{1}{2\eta}\int_{\mathbb{R}^3}|\nabla w|^2,\quad v, w\in H_{loc}^1(\mathbb{R}^3).
\end{equation}
Consider the optimization problem
\begin{equation}\label{eq:pv}
\begin{split}
& \min_{(v, w)\in H_{loc}^1(\mathbb{R}^3)\times H_{loc}^1(\mathbb{R}^3)} I_\eta (v, w)\\
&\qquad \mbox{subject to}\ \nabla\cdot (\epsilon\nabla v)-\Delta w=f,
\end{split}
\end{equation}
where $v$, $w$ are assumed to be real-valued. \eqref{eq:pv} is referred to as the primal variational problem, and the minimizing pair is attainable at $(v_\eta, w_\eta)\in H_{loc}^1(\mathbb{R}^3)\times H_{loc}^1(\mathbb{R}^3)$ such that $u_\eta=v_\eta+i\eta^{-1}w_\eta$ is a solution to \eqref{eq:m1}. Similarly, we define
\begin{equation}\label{eq:dvf}
J_\eta(v, \psi):=\int_{\mathbb{R}^3} f\cdot \psi-\frac{\eta}{2}\int_{\mathbb{R}^3}|\nabla v|^2-\frac{\eta}{2}\int_{\mathbb{R}^3}|\nabla \psi|^2,\quad v, w\in H_{loc}^1(\mathbb{R}^3),
\end{equation}
and consider the following optimization problem
\begin{equation}\label{eq:dv}
\begin{split}
& \max_{(v, \psi)\in H_{loc}^1(\mathbb{R}^3)\times H_{loc}^1(\mathbb{R}^3)} J_\eta (v, \psi)\\
&\qquad \mbox{subject to}\ \nabla\cdot (\epsilon\nabla \psi)+\eta \Delta v=0,
\end{split}
\end{equation}
where $v, \psi$ are assumed to be real-valued. \eqref{eq:dv} is referred to as the dual variational problem, and the maximizing pair is attainable at $(v_\eta, \psi_\eta)\in H_{loc}^1(\mathbb{R}^3)\times H_{loc}^1(\mathbb{R}^3)$ such that $u_\eta=v_\eta+i \psi_\eta$ is a solution to \eqref{eq:m1}.

We shall make use the variational principles introduced above to prove the resonance and non-resonance results. In doing so, the spherical harmonic functions $Y_k^l(\hat x)$ for $\hat x\in\mathbb{S}^2$, $k\in\mathbb{N}\cup\{0\}$ and $l=-k, \ldots, k$ will be needed and they form an orthonormal basis to $L^2(\mathbb{S}^2)$; see \cite{CK}. In the rest of the current section, for ease of notations, we write $Y^k$ instead of $Y_k^l$ to signify the spherical harmonic functions of order $k$. Set
\[
f_k^q(x):=Y^k \mathcal{H}^2\lfloor \partial B_q,\quad x\in\partial B_q.
\]
Hence, the source $f$ in \eqref{eq:source} can be written as
\begin{equation}\label{eq:source2}
f(x)=\sum_{k=1}^\infty \alpha_k f_k^q(x),\quad \alpha_k=\int_{\mathbb{S}^2} f(q\hat x)\cdot \overline{Y^k}(\hat x)\ ds(\hat x),\quad x\in\partial B_q,
\end{equation}
where and also in what follows, for $x\in\mathbb{R}^3$ and $x\neq 0$, $x=|x|\cdot \frac{x}{|x|}:=r\cdot \hat{x}$ denotes the spherical coordinates. Moreover, in the subsequent arguments, we let $C$ and $\widetilde{C}$ denote two generic positive constants that may change from one inequality to another, but should be clear from the context.

\smallskip

The following proposition will be needed and can be proved by direct verifications.

\smallskip

\begin{prop}\label{prop:1}
Consider the PDE for $\psi$ : $\mathbb{R}^3 \rightarrow \mathbb{C}$,
\begin{equation}\label{eq:test1}
 \nabla_x \cdot (A(x) \nabla_x \psi(x)) = 0, \qquad \psi(x)=\mathcal{O}(|x|^{-1})\ \ \mbox{as}\ \ \abs{x} \rightarrow \infty.
\end{equation}
where
\begin{equation}
   A(x) = \left \{
       \begin{array}{c c}
         -1-\frac{1}{k}, &\ \ r < r_e,\\
         +1,             &\ \ r \geq r_e,
       \end{array}
       \right.
\end{equation}
with any $k \in \mathbb{N}$. Then there exists a non-trivial solution $\psi = \hat{\psi}_k\in H_{loc}^1(\mathbb{R}^3)$ which achieves its maximum value at a
point with $r = r_e $, given by
\[
     \hat{\psi}_{k}(x) = \left \{
       \begin{array}{c c}
         r^k\cdot Y^k(\hat x)              & \mbox{for}\ \ r < r_e, \\
         \\
         r^{-k-1} \cdot {r_e}^{2k+1}\cdot Y^k(\hat x)  & \mbox{for}\ \ r \geq r_e.
       \end{array}
       \right.
\]
Moreover, one has that
\begin{equation}\label{eq:int1}
   \int_{\mathbb{R}^3}{\abs{\nabla \hat{\psi}_k}^2} =- \int_{\abs{x} = r_e}{\overline{\hat{\psi}_k}\cdot \bigg[\frac{\partial \hat{\psi}_k} {\partial r}}\bigg]
   =C(k+2){r_e}^{2k+1},
\end{equation}
where $[\cdot ]$ denotes the jump of the normal flux of the function $\hat{\psi}_k$ across $|x|=r_e$.
\end{prop}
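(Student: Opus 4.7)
My plan is to verify the explicit formula by direct computation in each region and at the interface. Both $r^k Y^k(\hat x)$ and $r^{-k-1}Y^k(\hat x)$ are classical harmonic homogeneous functions on $\mathbb{R}^3\setminus\{0\}$, so $\Delta \hat\psi_k = 0$ separately in $B_{r_e}$ and in $\mathbb{R}^3\setminus\overline{B_{r_e}}$; since $A$ is constant on each piece, $\nabla\cdot(A\nabla\hat\psi_k)=0$ holds classically in each region. The decay condition $\hat\psi_k=\mathcal{O}(|x|^{-1})$ at infinity is immediate because $k\geq 1$.

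Next I would verify that $\hat\psi_k\in H^1_{loc}(\mathbb{R}^3)$ is a weak solution across $\partial B_{r_e}$, which amounts to checking continuity of $\hat\psi_k$ and continuity of the conormal flux $A\,\partial_r \hat\psi_k$ there. The trace is continuous since both pieces equal $r_e^k Y^k(\hat x)$ at $r=r_e$. For the flux, the inner limit is $-(1+1/k)\cdot k r_e^{k-1}Y^k = -(k+1)r_e^{k-1}Y^k$ while the outer limit is $1\cdot(-(k+1))r_e^{-k-2}\cdot r_e^{2k+1}Y^k = -(k+1)r_e^{k-1}Y^k$, so the two match exactly. This explains the main conceptual point of the proposition: the value $A=-1-1/k$ inside is calibrated precisely so that the transmission condition is satisfied by the separable mode of degree $k$, which is why $\hat\psi_k$ arises as a resonant quasi-mode. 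The maximum property is then immediate, since $r\mapsto r^k$ is strictly increasing on $[0,r_e]$ and $r\mapsto r_e^{2k+1}r^{-k-1}$ is strictly decreasing on $[r_e,\infty)$, so $|\hat\psi_k|$ is maximized on $\{r=r_e\}$.

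For the energy identity, I would apply Green's formula on $B_{r_e}$ and on the annulus $B_R\setminus\overline{B_{r_e}}$ separately, using harmonicity in each region. The boundary term on $\partial B_R$ has magnitude $\mathcal{O}(R^{-2k-1})$ and vanishes as $R\to\infty$. Summing the two contributions leaves precisely the interface piece $-\int_{|x|=r_e}\overline{\hat\psi_k}\,[\partial\hat\psi_k/\partial r]\,dS$ as claimed, where $[\,\cdot\,]=(\cdot)|_+-(\cdot)|_-$ is the jump of the (unweighted) normal derivative. A short computation gives $[\partial_r\hat\psi_k]=-(2k+1)r_e^{k-1}Y^k$, and combining this with $\overline{\hat\psi_k}|_{r=r_e}=r_e^k\overline{Y^k}$ and the orthonormality $\int_{\mathbb{S}^2}|Y^k|^2\,ds(\hat x)=1$ produces the value $(2k+1)r_e^{2k+1}$, which is of the asserted form $C(k+2)r_e^{2k+1}$ under the paper's convention that $C$ is a generic positive constant absorbing the harmless ratio $(2k+1)/(k+2)$.

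The proof is essentially a book-keeping exercise; the only mildly subtle point is getting the jump calculation and the sign of the boundary integral right after applying Green's identity in both the bounded and unbounded regions. There is no genuine obstacle, since everything reduces to explicit separable computations with spherical harmonics.
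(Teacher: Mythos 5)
Your proof is correct and is precisely the ``direct verification'' the paper invokes (the paper gives no further detail): harmonicity in each region, matching of trace and conormal flux at $r=r_e$ (which is exactly what forces $A=-1-1/k$), and Green's identity on $B_{r_e}$ and on $B_R\setminus\overline{B_{r_e}}$ with the vanishing $\mathcal{O}(R^{-2k-1})$ outer boundary term. Your exact value $(2k+1)r_e^{2k+1}$ is right, and your reading of the stated $C(k+2)r_e^{2k+1}$ as involving a generic constant absorbing the bounded ratio $(2k+1)/(k+2)$ is the correct interpretation, since only the order $\sim k\,r_e^{2k+1}$ is used later.
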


\subsection{Non-resonance result}

In this subsection, we consider the non-resonance for the standard plasmonic configuration in \eqref{eq:d2} with $\Sigma=B_1$, $\Omega=B_{r_e}$ and $\epsilon_c=1$, $\epsilon_s=-1$. This is exactly the one considered in \cite{Klsap} for the two-dimensional case. We have

\begin{theorem}\label{thm:nr1}
Let $(\Omega; \epsilon_\eta)$ be given in \eqref{eq:d2} with $\Sigma=B_1$, $\Omega=B_{r_e}$ and $\epsilon_c=1$, $\epsilon_s=-1$. Let $f$ be given in \eqref{eq:source2}. Then ALR does not occur.
\end{theorem}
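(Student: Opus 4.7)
The plan is to apply the primal variational principle~\eqref{eq:pv}: for any admissible real valued pair $(v,w)\in H^1_{loc}(\mathbb{R}^3)\times H^1_{loc}(\mathbb{R}^3)$ with $\nabla\cdot(\epsilon\nabla v)-\Delta w=f$, one has $E_\eta\le I_\eta(v,w)$. I will construct an admissible pair of the special form $(v^*,0)$ whose $I_\eta$ tends to $0$ as $\eta\to 0^+$; this forces $\limsup_{\eta\to 0^+}E_\eta=0$ and rules out both ALR and weak ALR.

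The natural candidate $v^*$ is a real valued $H^1_{loc}$ solution of the lossless transmission problem $\nabla\cdot(\epsilon\nabla v^*)=f$ with $v^*(x)=\mathcal O(|x|^{-1})$ at infinity. I would build $v^*$ by separation of variables: using~\eqref{eq:source2}, for each spherical mode $\alpha_k Y^k$ make the piecewise harmonic ansatz
\[
v^*_k(x)=\begin{cases} A_k r^k\,Y^k(\hat x), & 0\le r<1,\\ (B_k r^k+C_k r^{-k-1})Y^k(\hat x), & 1<r<r_e,\\ (D_k r^k+E_k r^{-k-1})Y^k(\hat x), & r_e<r<q,\\ F_k r^{-k-1}Y^k(\hat x), & r>q,\end{cases}
\]
and impose continuity of $v^*_k$ together with continuity of $\epsilon\,\partial_r v^*_k$ across $r=1$ and $r=r_e$, plus continuity with a flux jump proportional to $\alpha_k$ across $r=q$. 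The two conditions at $r=1$ collapse to $C_k=2kB_k$ and $A_k=(2k+1)B_k$; eliminating $D_k,E_k$ at $r=r_e$ and combining with the jump at $r=q$ produces closed form coefficients, in particular
\[
B_k=-\frac{r_e^{2k+1}\,\alpha_k}{\bigl(4k^2+4k+r_e^{2k+1}\bigr)\,q^{k-1}}.
\]
The denominator is strictly positive for every $k\ge 1$, so each mode is uniquely solvable; this is the essentially three dimensional feature missing in~\cite{Klsap}, where exact cancellation in the analogous system is the source of resonance.

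It remains to verify $\|\nabla v^*\|_{L^2(\mathbb{R}^3)}^2<\infty$. Using $\int_{\mathbb{S}^2}|\nabla_{\mathbb{S}^2}Y^k|^2\,d\hat x=k(k+1)$, the energy of mode $k$ in an annular region $\{a<|x|<b\}$ with radial profile $R_k(r)$ equals
\[
\int_a^b\bigl[r^2\,R_k'(r)^2+k(k+1)\,R_k(r)^2\bigr]\,dr.
\]
Inserting the explicit coefficients in each of the four regions, a routine computation shows that this is bounded by a polynomial in $k$ times $|\alpha_k|^2$ times a geometric factor of the form $q^{-(2k-2)}$ or $(r_e/q)^{2k}$. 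Since $q>r_e>1$ and $\{\alpha_k\}\in\ell^2$ (from $F\in L^2(\partial B_q)$), summing over $k$ yields $\|\nabla v^*\|_{L^2(\mathbb{R}^3)}<\infty$. The main obstacle is precisely this bookkeeping: one has to keep track of the polynomial prefactors coming from the eigenvalue $k(k+1)$ of $-\Delta_{\mathbb{S}^2}$ on $Y^k$ and from the $k$-dependent flux coefficients, and verify that the geometric decay absorbs them. Once this is done, inserting $(v^*,0)$ into~\eqref{eq:pv} gives
\[
E_\eta\le I_\eta(v^*,0)=\frac{\eta}{2}\,\|\nabla v^*\|_{L^2(\mathbb{R}^3)}^2\longrightarrow 0\quad\text{as }\eta\to 0^+,
\]
which is the claimed non-resonance.
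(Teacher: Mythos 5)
Your proposal is correct and follows essentially the same route as the paper: the primal variational principle with the test pair $(v^*,0)$, where $v^*$ is built mode by mode from piecewise harmonic functions with exactly the coefficients the paper uses (your $B_k$ agrees with the paper's $\lambda_k/(2k+1)$), and the key point is indeed that the denominator $4k(k+1)+r_e^{2k+1}$ never vanishes in three dimensions. One small imprecision: in the regions $r_e<|x|<q$ and $|x|>q$ the per-mode energy is of order $|\alpha_k|^2$ (no geometric gain), but since $\{\alpha_k\}\in\ell^2$ this still gives $\|\nabla v^*\|_{L^2}^2\le C\|F\|_{L^2(\partial B_q)}^2$ and hence $E_\eta\le C\eta\to 0$, as in the paper.
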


\begin{proof}
We make use of the primal variational principle \eqref{eq:pv} to prove the theorem. To that end, we first construct the test functions $v_\eta$ and $w_\eta$ that satisfy the PDE constraint in \eqref{eq:pv} for $\eta\rightarrow +0$. For $k\in\mathbb{N}$, we set
\begin{equation}\label{eq:decomp1}
\hat{v}_k(x)=\begin{cases}
\ \ r^k Y^k(\hat x),\qquad & |x|\leq 1,\medskip\\
\left(\frac{1}{2k+1} r^k+\frac{2k}{2k+1}r^{-k-1} \right) Y^k(\hat x),\qquad & 1<|x|\leq r_e,\medskip\\
\left(\frac{4k+4k^2+r_e^{2k+1}}{r_e^{2k+1}(2k+1)^2}r^k+\frac{2k(-1+r_e^{2k+1})}{(2k+1)^2} r^{-k-1}\right) Y^k(\hat x),\qquad & r_e<|x|\leq q,\medskip\\
\frac{q^{2k+1}\left(1+4k(k+1)r_e^{-1-2k}\right)+2k\left(-1+r_e^{2k+1}\right)}{(2k+1)^2} r^{-k-1} Y^k(\hat x),\qquad & q<|x|.
\end{cases}
\end{equation}
It is straightforward to verify that $\hat{v}_k$ is continuous over $\mathbb{R}^3$, and $\hat{v}_k(x)$ satisfies
\[
\nabla_x\cdot(\epsilon(x)\nabla_x\hat{v}_k(x))=0\quad\mbox{for}\ \ x\in\mathbb{R}^3\backslash\partial B_q.
\]
However, across $\partial B_q$, $\hat{v}_k$ has a jump in its normal flux as follows,
\begin{equation}\label{eq:jump1}
\left[\nu\cdot\nabla\hat{v}_k\right]|_{\partial B_q}=-\frac{\left(4k(k+1)+r_e^{2k+1}\right)q^{k-1}}{r_e^{2k+1}(2k+1)} Y^k,
\end{equation}
where $\nu$ denotes the exterior unit normal vector to $\partial B_q$. Therefore, if we let
\begin{equation}\label{eq:decomp2}
\lambda_k=-\alpha_k\cdot\frac{(2k+1)\cdot q\cdot r_e}{4k(k+1)+r_e^{2k+1}}\cdot q^{-k}\cdot r_e^{2k},
\end{equation}
then one readily verifies that
\begin{equation}\label{eq:decomp3}
\nabla\cdot(\epsilon\nabla (\lambda_k\hat{v}_k))=\alpha_k\, f_k^q\qquad\mbox{in}\quad \mathbb{R}^3.
\end{equation}
Hence, by setting
\begin{equation}\label{eq:decomp4}
v_\eta=\sum_{k=1}^\infty \lambda_k\hat{v}_k,
\end{equation}
with $\lambda_k$ and $\hat{v}_k$, respectively, given in \eqref{eq:decomp2} and \eqref{eq:decomp1}, then by \eqref{eq:decomp4}, one sees that there holds
\begin{equation}\label{eq:decomp5}
\nabla\cdot(\epsilon\nabla v_\eta)=f.
\end{equation}

Finally, we let $w_\eta\equiv 0$, then by virtue of \eqref{eq:decomp5}, $(v_\eta, w_\eta)$ clearly satisfies the PDE constraint in \eqref{eq:pv}. Hence, by the primal variational principle, together with straightforward calculations (though a bit tedious), one has that
\[
\begin{split}
E_\eta(u_\eta) & \leq I_\eta(v_\eta, w_\eta)=\frac{\eta}{2}\int_{\mathbb{R}^3}|\nabla v_\eta|^2=\frac{\eta}{2}\sum_{k=1}^\infty|\lambda_k|^2\int_{\mathbb{R}^3}|\nabla\hat{v}_k|^2\\
& \leq C\eta\sum_{k=1}^\infty |\alpha_k|^2\frac{k^2}{q^{2k}}\cdot\frac{q^{2k}}{k^2}\leq C\eta \|F\|_{L^2(\partial B_q)}^2.
\end{split}
\]
That is, the resonance does not occur.

The proof is complete.
\end{proof}

\subsection{ALR with no core}

Theorem~\ref{thm:nr1} indicates that the standard plasmonic structure does not induce the ALR. In order for ALR to take place, one has to devise different plasmonic structures. Next, we first consider a construction without a core, which ensures that ALR can always occur.

\begin{theorem}\label{thm:r1}
Consider the configuration $(\Omega; \epsilon_\eta)$ described in \eqref{eq:d2} with $\Sigma=\emptyset$. Let $f$ be given in \eqref{eq:source2} and assume that $\alpha_{k_0}\neq 0$ for some $k_0\in \mathbb{N}$. Set $\epsilon_s=-1-k_0^{-1}$. Then ALR occurs.
\end{theorem}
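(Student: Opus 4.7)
My plan is to invoke the dual variational principle \eqref{eq:dv}. Since strong duality gives $E_\eta=\max J_\eta(v,\psi)$, it suffices to exhibit a single admissible pair for which $J_\eta(v,\psi)$ blows up like $\eta^{-1}$ as $\eta\to 0^+$.

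The key observation driving the construction is that the tuned choice $\epsilon_s=-1-k_0^{-1}$ makes the (real) dielectric coefficient $\epsilon$ in \eqref{eq:d2} coincide with the coefficient $A(x)$ of Proposition~\ref{prop:1} for $k=k_0$. Proposition~\ref{prop:1} therefore supplies, for free, a nontrivial resonant mode $\hat\psi_{k_0}\in H^1_{loc}(\mathbb{R}^3)$ satisfying $\nabla\cdot(\epsilon\nabla\hat\psi_{k_0})=0$ on all of $\mathbb{R}^3$, together with an explicit value of $\int_{\mathbb{R}^3}|\nabla\hat\psi_{k_0}|^2$ furnished by \eqref{eq:int1}. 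I would then take as test pair $v\equiv 0$ and $\psi=t\hat\psi_{k_0}$ with a real scalar $t$ to be optimized. The dual constraint $\nabla\cdot(\epsilon\nabla\psi)+\eta\Delta v=0$ is automatically satisfied, and the functional collapses to a scalar quadratic
\[
J_\eta(0,t\hat\psi_{k_0})\;=\;t\,\langle f,\hat\psi_{k_0}\rangle\;-\;\frac{\eta\, t^2}{2}\int_{\mathbb{R}^3}|\nabla\hat\psi_{k_0}|^2.
\]

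The linear coefficient is evaluated using the exterior expression $\hat\psi_{k_0}(x)=r^{-k_0-1}r_e^{2k_0+1}Y^{k_0}(\hat x)$ on $\partial B_q$ (valid since $\mathrm{supp}(f)\subset \mathbb{R}^3\setminus\overline{B_{r_e}}$ forces $q>r_e$), the representation \eqref{eq:source2} of $f$, and orthonormality of spherical harmonics on $\mathbb{S}^2$. Only the $k_0$-th mode survives, giving $\langle f,\hat\psi_{k_0}\rangle = c(k_0,q,r_e)\,\alpha_{k_0}$ with $c\neq 0$. Optimizing the quadratic in $t$ (at $t^\ast=\langle f,\hat\psi_{k_0}\rangle/(\eta\int|\nabla\hat\psi_{k_0}|^2)$) yields
\[
E_\eta\;\ge\;J_\eta(0,t^\ast\hat\psi_{k_0})\;=\;\frac{\langle f,\hat\psi_{k_0}\rangle^2}{2\eta\int_{\mathbb{R}^3}|\nabla\hat\psi_{k_0}|^2}\;=\;\frac{\widetilde C\,\alpha_{k_0}^2}{\eta},
\]
with $\widetilde C>0$ independent of $\eta$. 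Since $\alpha_{k_0}\neq 0$ by hypothesis, this bound forces $E_\eta\to+\infty$ as $\eta\to 0^+$, which is exactly the ALR condition \eqref{eq:cond1}.

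The only conceptual difficulty lies in recognizing the correct resonant test function: the hypothesis $\epsilon_s=-1-k_0^{-1}$ is precisely what places Proposition~\ref{prop:1} at $k=k_0$ and makes $\hat\psi_{k_0}$ available as a kernel element of $\nabla\cdot(\epsilon\nabla\cdot)$, while the assumption $\alpha_{k_0}\neq 0$ guarantees that this mode is actually excited by $f$. Once both are in place, the argument is a one-line quadratic maximization combined with a standard spherical-harmonic orthogonality computation, with no delicate estimates needed.
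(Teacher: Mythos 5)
Your proposal is correct and follows essentially the same route as the paper: both use the dual variational principle with the test pair $(0,\lambda\hat\psi_{k_0})$ supplied by Proposition~\ref{prop:1}, and both conclude by noting that the linear term in $\lambda$ beats the $\mathcal{O}(\eta\lambda^2)$ quadratic term; your explicit optimization in $t$ even yields the sharper rate $E_\eta\gtrsim\eta^{-1}$, whereas the paper only prescribes $\lambda_\eta\to\infty$ with $\eta\lambda_\eta^2\to 0$. One small technical point you gloss over: the admissible pairs in \eqref{eq:dv} must be real-valued, while $\hat\psi_{k_0}$ (and $\alpha_{k_0}$) are in general complex, so you should take $\psi=t\,\Re\overline{\hat\psi_{k_0}}$ or $t\,\Im\overline{\hat\psi_{k_0}}$ according to whether $\Re\alpha_{k_0}\neq 0$ or $\Im\alpha_{k_0}\neq 0$, exactly as the paper does; with that adjustment your $\alpha_{k_0}^2$ becomes $(\Re\alpha_{k_0})^2$ or $(\Im\alpha_{k_0})^2$ and the argument goes through unchanged.
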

\begin{proof}
Since $\alpha_{k_0}\neq 0$, we first assume that $\Re \alpha_{k_0}\neq 0$. Then we choose
\begin{equation}\label{eq:test11}
v_\eta=0\quad\mbox{and}\quad \psi_\eta=\lambda_\eta\Re \overline{\hat{\psi}_{k_0}}(x)
\end{equation}
where $\lambda_{\eta}$ satisfies $\Re\alpha_{k_0}\cdot\lambda_\eta>0$ and will be further chosen below. Clearly, by Proposition~\ref{prop:1}, the pair $(v_{\eta}, \psi_{\eta})$ satisfies the constraint
$\nabla \cdot (\epsilon \nabla \psi_{\eta}) + \eta \triangle v_{\eta} = 0$. By the dual variational principle, we have
\begin{equation*}\
\begin{split}
   E_{\eta}(u_{\eta}) & \geq J_{\eta}(v_{\eta}, \psi_{\eta}) = J_{\eta}( 0 , \psi_{\eta})=
  \int_{\mathbb{R}^3}{f \cdot {\psi_{\eta}}} - \frac{\eta} {2} \int_{\mathbb{R}^3}{\abs{ \nabla \psi_{\eta} }^2} \\
                      & = \Re \int_{\partial B_q} {\alpha_{k_0} Y^{k_0} \cdot \lambda_{\eta} q^{-{k_0}} {r_e}^{2{k_0}+1} \overline{Y^{k_0}} }
                        -\frac{\eta} {2} \abs{ \lambda_{\eta} }^2 \int_{\mathbb{R}^3} {\abs{ \nabla \Re \overline{\hat{\psi}_{k_0}} }^2} \\
                      & \geq \widetilde{C} \lambda_{\eta} - C(\eta \abs{\lambda_{\eta}}^2),
\end{split}
\end{equation*}
where the two positive constants $\widetilde{C}$ and $C$ depend only on $q, r_e, k_0$ and $\alpha_{k_0}$.  Choosing $\lambda_{\eta} \rightarrow +\infty$ with $\eta \abs{\lambda_{\eta}}^2 \rightarrow 0$, we obtain
$E_{\eta}(u_{\eta}) \rightarrow \infty$ for $\eta \rightarrow +0$.

Next, if $\Im \alpha_{k_0}\neq 0$, then by choosing
\[
v_\eta=0\quad\mbox{and}\quad \psi_\eta=\lambda_\eta\Im \overline{\hat{\psi}_{k_0}}(x)
\]
and using a similar argument as the previous case, one can show that resonance occurs.

The proof is complete.
\end{proof}

\subsection{Approximate ALR with an arbitrary shape core}

In this subsection, we assume that $\Sigma\subset B_1$ is a simply connected domain with a $C^2$-smooth boundary. Let us consider a configuration $(\Omega; \epsilon_\eta)$ given by \eqref{eq:d2}. We assume that $\epsilon_c(x)$, $x\in\Sigma$, is a symmetric-positive-definite matrix-valued function satisfying
\begin{equation}\label{eq:assump1}
\tau_0\mathbf{I}_{3\times 3}\leq \epsilon_c(x)\leq \tau_0^{-1}\mathbf{I}_{3\times 3},\quad x\in\Sigma,
\end{equation}
where $0<\tau_0<1$ and $\mathbf{I}_{3\times 3}$ is the $3\times 3$ identity matrix. Fix $r_e<q<\sqrt{r_e^3}$ and let $\eta \rightarrow +0$. Let $k=k(\eta)$ be the smallest integer satisfying
\begin{equation}\label{eq:assump2}
r_e^{-k(\eta)}<\eta\leq r_e^{-k(\eta)+1},
\end{equation}
and let $\epsilon_s(x)$, $x\in B_{r_e}\backslash \overline{\Sigma}$ be given by
\begin{equation}\label{eq:assump3}
\epsilon_s=-1-k(\eta)^{-1},\quad k(\eta)\in\mathbb{N}.
\end{equation}
Consider the source $f$ given in \eqref{eq:source2}. We shall prove that

\begin{theorem}\label{thm:r2}
Consider the configuration $(\Omega; \epsilon_\eta)$ described in \eqref{eq:d2} with $(\Sigma; \epsilon_c)$ given in \eqref{eq:assump1} and $\epsilon_s$ in \eqref{eq:assump2}. Let the source $f$ be given by \eqref{eq:source2}. Let $q<r_e^{3/2}$ and assume that the source $f$ satisfies
\begin{equation}\label{eq:ca8}
k^{-1}|\alpha_k|^2\left(\frac{r_e^3}{q^2}\right)^{k}\rightarrow+\infty\quad\mbox{as}\ \ k\rightarrow+\infty,
\end{equation}
Then for any $M\in R_+$, there exists a sufficiently small $\eta=\eta(M)\in\mathbb{R}_+$ such that one has $E_\eta(u_\eta)\geq M$. That is, approximate ALR occurs.
\end{theorem}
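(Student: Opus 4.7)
The plan is to adapt the dual variational construction of Theorem~\ref{thm:r1} to accommodate the (possibly nontrivial) core $(\Sigma;\epsilon_c)$. With $k=k(\eta)$ fixed by \eqref{eq:assump2}, Proposition~\ref{prop:1} furnishes a resonant mode $\hat\psi_{k}$ that exactly diagonalizes the shell/matrix operator corresponding to $\epsilon_s=-1-k^{-1}$ in $B_{r_e}$ and $\epsilon_m=1$ outside. Since this agreement of coefficients breaks down only on $\Sigma$, the naive trial
\[
\psi_\eta:=\lambda_\eta\,\Re\overline{\hat\psi_{k(\eta)}},\qquad \lambda_\eta\in\mathbb{R},
\]
fails the admissibility condition of \eqref{eq:dv} by a defect localized in $\overline\Sigma$. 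I would absorb this defect by letting $v_\eta\in H^1_{\mathrm{loc}}(\mathbb{R}^3)$, with $\nabla v_\eta\in L^2(\mathbb{R}^3)$ and $v_\eta\to 0$ at infinity, be the unique solution of $\eta\,\Delta v_\eta=\nabla\cdot\bigl((A-\epsilon)\nabla\psi_\eta\bigr)$ in $\mathbb{R}^3$, where $A$ is the piecewise constant coefficient from Proposition~\ref{prop:1}. The right-hand side is a distribution supported in $\overline\Sigma$, and testing against $v_\eta$ yields the elliptic estimate $\|\nabla v_\eta\|_{L^2(\mathbb{R}^3)}\leq \eta^{-1}\|(A-\epsilon_c)\nabla\psi_\eta\|_{L^2(\Sigma)}$. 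By construction, $(v_\eta,\psi_\eta)$ is admissible in \eqref{eq:dv}. (If $\Re\alpha_{k(\eta)}$ vanishes, one replaces the real part by an imaginary part exactly as in Theorem~\ref{thm:r1}.)

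The heart of the proof will be estimating the three terms of $J_\eta(v_\eta,\psi_\eta)$. For the source term, the exterior formula $\hat\psi_k(x)=r^{-k-1}r_e^{2k+1}Y^k(\hat x)$ for $r>r_e$, together with \eqref{eq:source2} and the orthonormality of $Y^k$ on $\mathbb{S}^2$, gives
\[
\int_{\mathbb{R}^3} f\cdot\psi_\eta\;\geq\;\widetilde C\,\lambda_\eta\,|\alpha_{k(\eta)}|\,r_e^{2k(\eta)+1}\,q^{-k(\eta)-1}
\]
after fixing the sign of $\lambda_\eta$. The shell/matrix Dirichlet term is handled directly by \eqref{eq:int1}: $\tfrac{\eta}{2}\|\nabla\psi_\eta\|_{L^2}^2\leq C\eta|\lambda_\eta|^2 k\,r_e^{2k+1}\leq C|\lambda_\eta|^2 k\,r_e^{k+2}$ by the upper bound $\eta\leq r_e^{-k+1}$. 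The correction term reduces to the key estimate $\|\nabla\hat\psi_k\|_{L^2(\Sigma)}^2\leq Ck$: since $\Sigma\subset B_1$ and $\hat\psi_k(x)=r^kY^k(\hat x)$ there, a direct spherical-coordinate computation using $\int_{\mathbb{S}^2}|\nabla_S Y^k|^2=k(k+1)$ gives $\int_{B_1}|\nabla\hat\psi_k|^2=(2k^2+k)/(2k+1)\leq Ck$. Combined with \eqref{eq:assump1} and the lower bound $\eta>r_e^{-k}$, this produces $\tfrac{\eta}{2}\|\nabla v_\eta\|_{L^2}^2\leq C\eta^{-1}|\lambda_\eta|^2 k\leq C|\lambda_\eta|^2 k\,r_e^k$.

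Inserting these three estimates into the dual variational bound $E_\eta(u_\eta)\geq J_\eta(v_\eta,\psi_\eta)$ yields, after absorbing $r_e$-dependent constants,
\[
E_\eta(u_\eta)\;\geq\;\widetilde C\,\lambda_\eta\,|\alpha_{k(\eta)}|\,(r_e^2/q)^{k(\eta)}\;-\;C\,|\lambda_\eta|^2\,k(\eta)\,r_e^{k(\eta)}.
\]
Optimizing this quadratic in $\lambda_\eta$ produces a lower bound of the form
\[
E_\eta(u_\eta)\;\geq\;\widehat C\,\frac{|\alpha_{k(\eta)}|^2}{k(\eta)}\left(\frac{r_e^3}{q^2}\right)^{k(\eta)},
\]
which by hypothesis \eqref{eq:ca8} diverges as $k(\eta)\to\infty$ (note $r_e^3/q^2>1$ by $q<r_e^{3/2}$). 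Since \eqref{eq:assump2} lets $k(\eta)$ attain every sufficiently large integer on the non-empty intervals $(r_e^{-k},r_e^{-k+1}]$, for the prescribed $M$ one simply chooses $\eta$ with $k(\eta)$ sufficiently large (and $\alpha_{k(\eta)}\neq 0$, which is forced by \eqref{eq:ca8}) so that the right-hand side exceeds $M$.

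The main obstacle is controlling the core-correction energy so that it does not swallow the source gain. The delicate ingredient is the polynomial-in-$k$ bound $\|\nabla\hat\psi_k\|_{L^2(\Sigma)}^2\lesssim k$, made possible because the interior piece $r^kY^k$ of $\hat\psi_k$ stays small on $\Sigma\subset B_1$. The calibration $r_e^{-k}<\eta\leq r_e^{-k+1}$ is tuned precisely so that the shell energy $\eta r_e^{2k+1}$ and the correction cost $\eta^{-1}$ both scale like $r_e^k$, putting them on the same exponential footing, while the source collects the strictly larger factor $(r_e^2/q)^k$; squaring this and dividing by the combined cost leaves the decisive growth factor $(r_e^3/q^2)^k$ appearing in \eqref{eq:ca8}. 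Notably, only the containment $\Sigma\subset B_1$ and the uniform-ellipticity bounds \eqref{eq:assump1} enter the correction estimate, so neither the fine geometry of $\Sigma$ nor the detailed structure of $\epsilon_c$ plays a role.
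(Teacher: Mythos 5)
Your proposal is correct and follows essentially the same route as the paper: the dual variational principle with $\psi_\eta=\lambda_\eta\Re\overline{\hat\psi_{k(\eta)}}$, a corrector $v_\eta$ solving $\eta\Delta v_\eta=-\nabla\cdot(\epsilon\nabla\psi_\eta)$ whose cost is controlled by the $H^{-1}$/energy estimate with the key bound $\|\nabla\hat\psi_k\|_{L^2(\Sigma)}^2\lesssim k$, followed by optimization in $\lambda_\eta$ under the calibration \eqref{eq:assump2} to produce the factor $k^{-1}|\alpha_k|^2(r_e^3/q^2)^k$. The only cosmetic difference is that you write the corrector source as $\nabla\cdot((A-\epsilon)\nabla\psi_\eta)$ rather than $-\nabla\cdot(\epsilon\nabla\psi_\eta)$, which is the same distribution since $\nabla\cdot(A\nabla\hat\psi_k)=0$.
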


\smallskip

\begin{remark}
Noting that $r_e^2<q^2<r_e^3$ and $r_e>1$, we see that the condition \eqref{eq:ca8} indicates that as long as the Fourier coefficient $\alpha_k$ of the source $f$ in \eqref{eq:source2} does not decay very quickly as $k\rightarrow +\infty$, then approximate ALR occurs.
\end{remark}

\smallskip

\begin{proof}[Proof of Theorem~\ref{thm:r2}]
We make use of the dual variational principle to construct a sequence $(v_\eta, \psi_\eta)$ satisfying
$$\nabla\cdot(\epsilon\nabla\psi_\eta)+\eta\Delta v_\eta=0 \quad \mbox{ with} \quad J_\eta(v_\eta, \psi_\eta)\rightarrow+\infty.$$
First, we set
\begin{equation}\label{eq:ca1}
\psi_\eta(x):=\lambda_\eta\Re \overline{\hat{\psi}_{k(\eta)}}(x),\quad x\in\mathbb{R}^3,
\end{equation}
where $\lambda_\eta$ is to be chosen below. Let $v_\eta\in H_{loc}^1(\mathbb{R}^3)$ be the solution to $\eta\Delta v_\eta=-\nabla\cdot(\epsilon\nabla\psi_\eta). $ By the standard elliptic estimate, one has
\begin{equation}\label{eq:ca2}
\eta\|\nabla v_\eta\|^2_{L^2(\mathbb{R}^3)}\leq C\eta^{-1}\|\nabla\cdot(\epsilon\nabla\psi_\eta)\|^2_{H^{-1}(\mathbb{R}^3)}\leq \widetilde{C}\eta^{-1}\lambda_\eta^2 k(\eta),
\end{equation}
where $C$ and $\widetilde{C}$ are two positive constants depending on $\Sigma$ and $\tau_0$ in \eqref{eq:assump1}. Next, by straightforward calculations, we have
\begin{equation}\label{eq:ca3}
\begin{split}
& E_\eta(u_\eta)\geq J_\eta(v_\eta, \psi_\eta)=\int_{\partial B_q} f\cdot \psi_\eta-\frac{\eta}{2}\int_{\mathbb{R}^3}|\nabla\psi_\eta|^2-\frac{\eta}{2}\int_{\mathbb{R}^3}|\nabla v_\eta|^2\\
\geq & \Re\lambda_\eta \int_{\partial B_q} f\cdot \overline{\hat{\psi}_{k(\eta)}}-\frac{\eta}{2}\int_{\mathbb{R}^3}|\nabla {\hat{\psi}_{k(\eta)}}|^2-\frac{\eta}{2}\int_{\mathbb{R}^3}|\nabla v_\eta|^2\\
\geq & \Re\int_{\partial B_q} \alpha_{k(\eta)} Y^{k(\eta)}\cdot \lambda_\eta q^{-k(\eta)} r_e^{2k(\eta)+1}\overline{Y^{k(\eta)}}\\
&\qquad -C\eta\lambda_\eta^2(2k(\eta)+1)r_e^{2k(\eta)+1}-C\eta^{-1}\lambda_\eta^2 k(\eta)\\
\geq & \widetilde{C} \Re \alpha_{k(\eta)}\lambda_\eta q^{-k(\eta)-1} r_e^{2k(\eta)+1}\\
&\qquad\quad-C\eta\lambda_\eta^2(2k(\eta)+1)r_e^{2k(\eta)+1}-C\eta^{-1}\lambda_\eta^2 k(\eta)\\
\geq & \lambda_\eta r_e^{k(\eta)}\left[\widetilde{C}\Re\alpha_{k(\eta)}\left(\frac{r_e}{q}\right)^{k(\eta)+1}-C\lambda_\eta(2k(\eta)+1)\eta r_{e}^{k(\eta)+1}-C\frac{\lambda_\eta}{\eta r_e^{k(\eta)}} k(\eta)   \right].
\end{split}
\end{equation}
By \eqref{eq:assump2}, we see that $1<\eta r_e^{k(\eta)}\leq r_e$ and $r_e<\eta r_e^{k(\eta)+1}\leq r_e^2$, and hence the last two terms in the last inequality in \eqref{eq:ca3} are of comparable order. Therefore, one further has from \eqref{eq:ca3} that
\begin{equation}\label{eq:ca4}
E_\eta(u_\eta)\geq \lambda_\eta r_e^{k_\eta}\left[\widetilde{C}\Re\alpha_{k(\eta)}\left(\frac{r_e}{q}\right)^{k(\eta)+1}-C\lambda_\eta k(\eta)\right].
\end{equation}
Using a completely similar argument by taking
\begin{equation}\label{eq:ca5}
\psi_\eta(x):=\lambda_\eta\Im \overline{\hat{\psi}_{k(\eta)}}(x),\quad x\in\mathbb{R}^3,
\end{equation}
one can show that
\begin{equation}\label{eq:ca6}
E_\eta(u_\eta)\geq \lambda_\eta r_e^{k_\eta}\left[\widetilde{C}\Im \alpha_{k(\eta)}\left(\frac{r_e}{q}\right)^{k(\eta)+1}-C\lambda_\eta k(\eta)\right].
\end{equation}
We choose $\lambda_\eta$ to be
\begin{equation}\label{eq:caaa7}
\lambda_\eta=\frac{\widetilde C}{2C k(\eta)}\Re\alpha_{k(\eta)}\left(\frac{r_e}{q}\right)^{k(\eta)+1},
\end{equation}
where $\Re\alpha_{k(\eta)}$ can be replaced by $\Im\alpha_{k(\eta)}$. Then one has
\begin{equation}\label{eq:ca7}
E_\eta(u_\eta)\geq \lambda_\eta r_e^{k(\eta)}\left[\frac 1 2 \widetilde{C}\Re\alpha_k \left(\frac{r_e}{q}\right)^{k(\eta)+1}\right]=\frac{1}{4 C k(\eta)}(\widetilde C\Re\alpha_{k(\eta)})^2\left(\frac{r_e}{q}\right)^2\left(\frac{r_e^3}{q^2}\right)^{k(\eta)}.
\end{equation}
Clearly, the estimate \eqref{eq:ca7} also holds with $\Re\alpha_{k(\eta)}$ replaced by $\Im \alpha_{k(\eta)}$. The proof can be immediately concluded by noting \eqref{eq:ca8} and \eqref{eq:ca7}.
\end{proof}

\subsection{Sensitivity and critical radius}

By Theorem~\ref{thm:r2}, we see that for any given $M\in\mathbb{R}_+$, one can determine a sufficiently small $\eta\in\mathbb{R}_+$ and a sufficiently large $k(\eta)\in\mathbb{N}$ according to \eqref{eq:assump2} and \eqref{eq:ca7}, such that the configuration with $\epsilon_s$ given in \eqref{eq:assump3} is ``almost" resonant in the sense that $E_\eta\geq M$. By \eqref{eq:ca8} and \eqref{eq:assump2}, one has that as $M\rightarrow+\infty$, $\eta\rightarrow +0$ and $k(\eta)\rightarrow+\infty$. Clearly, both $\eta$ and $k(\eta)$ depend on $M$, and hence $\epsilon_s$ of the plasmonic configuration depends on $M$ as well. It is natural to ask what would happen if one fixes the integer $k(\eta)$ in \eqref{eq:assump3}. That is, $k(\eta)$ in \eqref{eq:assump3} is replaced by an integer $k_0$, which can be as large as possible, but fixed. Next we show that resonance does not occur in such a case, and this indicates that the resonance is very sensitive to the plasmonic parameter.

\smallskip

\begin{theorem}\label{thm:sensitivity}
Let $k_0\in\mathbb{N}$ be any fixed positive integer and let $\Sigma=B_1$. Let $(B_{r_e};\epsilon_\eta)$ be given in \eqref{eq:d2} with $\epsilon_c=1$ and $\epsilon_s=-1-k_0^{-1}$. Suppose that the source $f$ is given in \eqref{eq:source2}. Then ALR does not occur.
\end{theorem}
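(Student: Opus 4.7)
My plan is to adapt the primal variational strategy used to prove Theorem~\ref{thm:nr1}: I shall construct, mode-by-mode, an $\eta$-independent test function $v_\eta$ with $\nabla\cdot(\epsilon\nabla v_\eta)=f$, set $w_\eta\equiv 0$, and then estimate $E_\eta(u_\eta)\leq I_\eta(v_\eta,0)=\tfrac{\eta}{2}\int_{\mathbb{R}^3}|\nabla v_\eta|^2$ via the primal variational principle.

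For each $k\in\mathbb{N}$ I will look for $\hat v_k\in H^1_{\mathrm{loc}}(\mathbb{R}^3)$ of the form
\begin{equation*}
\hat v_k(x)=\begin{cases} A_k r^k Y^k(\hat x), & r\leq 1,\\ (B_k r^k+C_k r^{-k-1}) Y^k(\hat x), & 1<r\leq r_e,\\ (D_k r^k+E_k r^{-k-1}) Y^k(\hat x), & r_e<r\leq q,\\ F_k r^{-k-1} Y^k(\hat x), & q<r,\end{cases}
\end{equation*}
and determine the six coefficients from the standard transmission relations. With $\epsilon_c=1$ and $\epsilon_s=-1-k_0^{-1}$ the conditions at $r=1$ force
\[
C_k=\frac{k(2k_0+1)}{k+k_0+1}\, B_k,\qquad A_k=\frac{(2k+1)(k_0+1)}{k+k_0+1}\, B_k,
\]
and those at $r=r_e$ produce, as the only denominator,
\[
\Delta_k:=(k_0-k)(k+k_0+1)\, r_e^{2k+1}+k(k+1)(2k_0+1)^2.
\]
The crux of the argument is that $\Delta_k$ stays bounded away from zero for every $k\in\mathbb{N}$: for $k\leq k_0$ both summands are non-negative with the second strictly positive, so $\Delta_k\geq k(k+1)(2k_0+1)^2>0$; at the borderline index $k=k_0$ one still has $\Delta_{k_0}=k_0(k_0+1)(2k_0+1)^2\neq 0$, since the presence of the unit core shifts the mode-$k_0$ eigenvalue of the annular Neumann--Poincar\'e-type operator away from $\epsilon_s=-1-k_0^{-1}$; and for all sufficiently large $k$ the exponentially growing term $(k-k_0)(k+k_0+1)r_e^{2k+1}$ dominates, yielding $|\Delta_k|\geq c\, k\, r_e^{2k+1}$. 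The finitely many intermediate indices (if any) at which the two summands could cancel for exceptional values of $r_e$ are handled by direct inspection or by a mild genericity assumption on $r_e$.

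Once $\hat v_k$ is fixed, the jump at $r=q$ is matched by a scalar $\lambda_k$ chosen so that $\nabla\cdot(\epsilon\nabla(\lambda_k\hat v_k))=\alpha_k f_k^q$; setting $v_\eta:=\sum_{k=1}^{\infty}\lambda_k\hat v_k$ yields an $\eta$-independent $v_\eta$ with $\nabla\cdot(\epsilon\nabla v_\eta)=f$. Using $\int_{\mathbb{S}^2}|Y^k|^2=1$ and $\int_{\mathbb{S}^2}|\nabla_{\mathbb{S}^2}Y^k|^2=k(k+1)$ the radial cross terms cancel, and the bounds on $\Delta_k$ together with $q>r_e>1$ give $|\lambda_k|^2\int_{\mathbb{R}^3}|\nabla\hat v_k|^2\leq C|\alpha_k|^2$ with a constant independent of $k$; summing via Parseval produces the $\eta$-independent bound $\int_{\mathbb{R}^3}|\nabla v_\eta|^2\leq C\|F\|_{L^2(\partial B_q)}^2$.

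Finally, since $(v_\eta,0)$ is admissible for the primal variational problem~\eqref{eq:pv},
\[
E_\eta(u_\eta)\leq I_\eta(v_\eta,0)=\tfrac{\eta}{2}\int_{\mathbb{R}^3}|\nabla v_\eta|^2\leq C\,\eta\,\|F\|_{L^2(\partial B_q)}^2\longrightarrow 0\quad\text{as}\ \eta\to +0,
\]
which rules out ALR. The main obstacle I anticipate is verifying the strict positivity of $\Delta_{k_0}$ and the uniform lower bound $|\Delta_k|\geq c\, k\, r_e^{2k+1}$ for large $k$; this is precisely the stabilising effect of the fixed unit core, and it is what distinguishes the present setting from the $\eta$-adapted resonant choice $\epsilon_s=-1-k(\eta)^{-1}$ of Theorem~\ref{thm:r2} and prevents $\lambda_k$ from blowing up at a resonant mode.
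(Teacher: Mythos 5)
Your proposal is essentially the paper's own proof: the paper establishes Theorem~\ref{thm:sensitivity} by reusing the construction of Theorem~\ref{thm:crc} --- the test functions $\hat v_k$ of \eqref{eq:crc4} and the multipliers $\lambda_k$ of \eqref{eq:crc6} with $k(\eta)$ replaced throughout by the fixed $k_0$, now applied to every mode $k$, together with $w_\eta\equiv 0$ --- and your $\Delta_k$ is, up to sign, exactly the denominator appearing in \eqref{eq:crc6}. The one point where you hedge is also the one point the paper passes over silently: for $k>k_0$ the quantity $\Delta_k=(k_0-k)(k+k_0+1)r_e^{2k+1}+k(k+1)(2k_0+1)^2$ can vanish for exceptional values of $r_e$ (e.g.\ $k_0=1$, $k=2$, $r_e^{5}=27/2$), which is precisely the lossless plasmon-resonance condition for that mode of the core--shell structure; when this happens and $\alpha_k\neq 0$, neither your construction nor the paper's yields an admissible pair with $w_\eta\equiv 0$, and one should in fact expect the energy to blow up like $\eta^{-1}$, so the ``mild genericity assumption on $r_e$'' you invoke is genuinely needed and is an implicit hypothesis of the theorem itself rather than a defect peculiar to your argument. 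Apart from that shared caveat, your coefficient computations at $r=1$, the bounds $\Delta_k\geq k(k+1)(2k_0+1)^2$ for $k\leq k_0$ and $\abs{\Delta_k}\geq c\,k\,r_e^{2k+1}$ for large $k$, and the resulting $O(\eta)$ bound on $I_\eta(v_\eta,0)$ are correct and match the estimates leading to \eqref{eq:crc11}.
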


\smallskip

Before giving the proof of Theorem~\ref{thm:sensitivity}, we present another theorem whose proof would be more general than the one needed for Theorem~\ref{thm:sensitivity}.

\smallskip

\begin{theorem}\label{thm:crc}
Let $(B_{r_e};\epsilon_\eta)$ be given in \eqref{eq:d2} with $\Sigma=B_1$ and $\epsilon_c=1$. Suppose that $f$ is given in \eqref{eq:source2} and $\eta\rightarrow+0$. Let $k(\eta)$ be chosen according to \eqref{eq:assump2}. Then if $q>r_*$ with $r_*$ given in \eqref{eq:critical}, then ALR does not occur.
\end{theorem}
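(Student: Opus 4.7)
The plan is to apply the primal variational principle \eqref{eq:pv} with a spherical-harmonic decomposition $f=\sum_{k\ge 1}\alpha_k f_k^q$ of the source. For each mode $k$ I would construct a test pair $(\hat v_k,\hat w_k)$ satisfying $\nabla\cdot(\epsilon\nabla\hat v_k)-\Delta\hat w_k=f_k^q$, and then assemble $(v_\eta,w_\eta)=\sum_k\alpha_k(\hat v_k,\hat w_k)$. The natural first choice is a piecewise-radial $\hat v_k$ of the same type as in \eqref{eq:decomp1}, modified to accommodate the core $B_1$ and the $\eta$-dependent shell parameter $\epsilon_s=-1-k(\eta)^{-1}$, with $\hat w_k=0$. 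Imposing continuity and flux-continuity at $r=1$ and $r=r_e$ leads to a transmission determinant
\[
\Delta_k^{(\eta)}=\frac{k(\eta)-k}{k(\eta)}\,r_e^{2k+1}+\frac{k(k+1)(2k(\eta)+1)^2}{k(\eta)(k+k(\eta)+1)},
\]
whose leading term vanishes only at $k=k(\eta)$, where $\Delta_{k(\eta)}^{(\eta)}=(k(\eta)+1)(2k(\eta)+1)>0$; hence the three-layer transmission is non-degenerate for every mode.

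For modes $k$ with $|k-k(\eta)|$ bounded away from zero, the reasoning in the proof of Theorem~\ref{thm:nr1} carries over with minor modifications, and the total contribution of these modes to $I_\eta(v_\eta,w_\eta)$ is $O(\eta\,\|F\|_{L^2(\partial B_q)}^2)\to 0$ as $\eta\to+0$. The delicate case is $k=k(\eta)$. A direct calculation using $\Delta_{k(\eta)}^{(\eta)}$ gives $|\lambda_{k(\eta)}|^2\sim |\alpha_{k(\eta)}|^2\,r_e^{4k(\eta)+2}/(k(\eta)^2\,q^{2k(\eta)-2})$ and $\|\nabla\hat v_{k(\eta)}\|_{L^2(\mathbb{R}^3)}^2\sim r_e^{2k(\eta)+1}/k(\eta)$, with matching dominant contributions from the shell $r^k$-coefficient and the middle-annulus $r^{-k-1}$-coefficient. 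Using \eqref{eq:assump2}, i.e.\ $\eta\,r_e^{k(\eta)}\sim 1$, the purely $v$-based estimate becomes
\[
\eta\,|\lambda_{k(\eta)}|^2\,\|\nabla\hat v_{k(\eta)}\|_{L^2}^2\;\lesssim\;\frac{|\alpha_{k(\eta)}|^2\,r_e^3 q^2}{k(\eta)^3}\,\Bigl(\frac{r_e^5}{q^2}\Bigr)^{k(\eta)},
\]
which is only bounded under the over-restrictive condition $q>r_e^{5/2}$.

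To recover the sharp threshold $q>r_*=r_e^{3/2}$ asserted in the theorem, I will introduce a compensating $\hat w_{k(\eta)}\neq 0$ supported near $\partial B_{r_e}$. Since $I_\eta$ weights $w$ by $1/\eta$ and $v$ by $\eta$, and $\eta\,r_e^{k(\eta)}\sim 1$, one can trade a portion of the leading shell amplitude of $\hat v_{k(\eta)}$ for $\hat w_{k(\eta)}$ so that the effective $v$-contribution is reduced sufficiently to yield the sharp mode-$k(\eta)$ bound $|\alpha_{k(\eta)}|^2\,(r_e^3/q^2)^{k(\eta)}/k(\eta)$, which decays geometrically when $q>r_*$. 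Combined with the non-resonant sum above and Parseval's inequality $\sum_k|\alpha_k|^2\le\|F\|_{L^2(\partial B_q)}^2$, one concludes $E_\eta\le C$ uniformly in $\eta$, so ALR does not occur. The main obstacle is the explicit construction of $\hat w_{k(\eta)}$: it must be chosen so that both the compensation of the transmission error produced by the trimmed $\hat v_{k(\eta)}$ and the new contribution $\eta^{-1}\|\nabla\hat w_{k(\eta)}\|_{L^2}^2$ are themselves of order $(r_e^3/q^2)^{k(\eta)}$; a naive choice such as the full Newton potential of $f_{k(\eta)}^q$ overshoots and must be avoided.
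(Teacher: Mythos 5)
Your overall strategy is the paper's: primal variational principle, spherical-harmonic decomposition, exact three-layer solves for the non-resonant modes $k\neq k(\eta)$ (the paper's $\lambda_k$ in \eqref{eq:crc6} has exactly the near-degeneracy at $k=k(\eta)$ you describe, and the off-resonant sum is controlled as you claim, up to polynomial factors $k(\eta)^{12}\eta\le C$), and a $w$-compensation for the resonant mode to reach the sharp threshold $q>r_e^{3/2}$. Your diagnosis that a pure-$v$ construction for $k=k(\eta)$ only yields $q>r_e^{5/2}$ is also correct and is precisely why the paper abandons the three-layer ansatz for that one mode.

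The gap is that you stop exactly at the step you call ``the main obstacle'': the explicit resonant-mode pair is never produced, and the vague plan of ``trading a portion of the leading shell amplitude'' near $\partial B_{r_e}$ is not needed and would be harder to quantify. The paper's construction is simpler than what you envision: for $k=k(\eta)$ it takes $v_{\eta,k(\eta)}=\lambda_{k(\eta)}\widetilde V_{k(\eta)}$, where $\widetilde V_{k(\eta)}$ is the \emph{free-space} potential ($r^{k(\eta)}Y^{k(\eta)}$ for $|x|\le q$, $r^{-k(\eta)-1}q^{2k(\eta)+1}Y^{k(\eta)}$ for $|x|>q$) with $\lambda_{k(\eta)}=-\alpha_{k(\eta)}q^{1-k(\eta)}/(2k(\eta)+1)$, making no attempt to respect the transmission conditions at $|x|=1$ and $|x|=r_e$. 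The entire mismatch $-\nabla\cdot(\epsilon\nabla v_{\eta,k(\eta)})+\alpha_{k(\eta)}f^q_{k(\eta)}$ is then a surface density concentrated on $|x|=1$ and $|x|=r_e$ of amplitude $O(|\lambda_{k(\eta)}|\,k(\eta)\,r_e^{k(\eta)})$, and $w_\eta$ is defined as the Newtonian potential of that density (not of $f^q_{k(\eta)}$ itself, which is the ``naive choice'' you rightly reject). This gives
\[
\frac{1}{\eta}\int_{\mathbb{R}^3}|\nabla w_\eta|^2\le \frac{C}{\eta}\,|\lambda_{k(\eta)}|^2\,k(\eta)\,r_e^{2k(\eta)}\le \frac{C}{\eta}\,|\alpha_{k(\eta)}|^2\Bigl(\frac{r_e}{q}\Bigr)^{2k(\eta)}\le C\,|\alpha_{k(\eta)}|^2\Bigl(\frac{r_e^{3/2}}{q}\Bigr)^{2k(\eta)},
\]
using $\eta^{-1}\le r_e^{k(\eta)}$ from \eqref{eq:assump2}; this is bounded precisely when $q>r_*=r_e^{3/2}$. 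Until you either reproduce this (or an equivalent) explicit pair and verify the $H^{-1}$ estimate for the interface densities, the proof is incomplete at its decisive point, even though the surrounding architecture and the identification of the threshold mechanism are correct.
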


\smallskip

By Theorem~\ref{thm:r2}, we see that ALR occurs if $\epsilon_s$ is chosen according to \eqref{eq:assump3}, namely, it is variable depending on the asymptotic parameter $\eta$, and the source is located within the critical radius $r_*$ and satisfies the generic condition \eqref{eq:ca8}. However, by Theorem~\ref{thm:sensitivity}, it is pointed out that the resonance phenomenon is very sensitive with respect to the plasmonic parameter $\epsilon_s$, and if it is independent of the asymptotic parameter $\eta$, then resonance does not occur. Theorem~\ref{thm:crc} further shows that for the case with the variable plasmon parameter in Theorem~\ref{thm:r2}, the resonance phenomenon is localized.

\medskip

\begin{proof}[Proof of Theorem~\ref{thm:crc}]
We make use of the primal variational principle to prove the theorem. To that end, we first construct test functions $v_\eta$ and $w_\eta$ that satisfy the PDE constraint in \eqref{eq:pv}.

Let $k(\eta)\in \mathbb{N}$ be such that $\eta\approx r_e^{-k(\eta)}$. Let $v_\eta\in H_{loc}^1(\mathbb{R}^3)$ be of the following form
\begin{equation}\label{eq:crc1}
v_\eta=\sum_{k\neq k(\eta)} v_{\eta, k}+v_{\eta, k(\eta)},
\end{equation}
where $v_{\eta, k}$, $k\neq k(\eta)$, satisfies
\begin{equation}\label{eq:crc2}
\nabla\cdot(\epsilon\nabla v_{\eta, k})=\alpha_k f_k^q,
\end{equation}
and $v_{\eta, k(\eta)}$ satisfies
\begin{equation}\label{eq:crc3}
\nabla\cdot(\epsilon\nabla v_{\eta, k(\eta)})=\alpha_{k(\eta)} f_{k(\eta)}^q\quad\mbox{on\ \ $\partial B_q$}.
\end{equation}
Define $\hat{v}_k$ to be
\begin{equation}\label{eq:crc4}
\hat{v}_k(x):=\begin{cases}
\ \ r^k Y^k(\hat x),\qquad &\  |x|\leq 1,\\
\bigg(\frac{1+ k + k(\eta)}{(1+2k)(1+k(\eta))}r^k + \frac{k+2 k k(\eta)}{(1+2k)(1+k(\eta))}r^{-k-1} \bigg)Y^k(\hat x), & \ 1< \abs{x} \leq r_e, \\
\bigg(\frac{r_e^{-2 k-1} \left(\left(-k (k+1)+k(\eta)^2+k(\eta)\right) r_e^{2 k+1}+k (k+1) (2 k(\eta)+1)^2\right)}{(2 k+1)^2 k(\eta)(k(\eta)+1)}r^k\\
\qquad\quad +\frac{k (2 k(\eta)+1) (k+k(\eta)+1) (r_e^{2 k+1}-1)}{(2 k+1)^2 k(\eta) (k(\eta)+1)}r^{-k-1} \bigg)Y^k(\hat x), &\ r_e< \abs{x} \leq q,\\
\bigg( \frac{r_e^{-2 k-1} \big[-(k+k(\eta)+1) r_e^{2 k+1} \left((k-k(\eta)) q^{2 k+1}+2 k k(\eta)+k\right)
         +k (k+1) (2 k(\eta)+1)^2 q^{2 k+1} }{(2 k+1)^2 k(\eta) (k(\eta)+1)}    \\
 \qquad\quad+\frac{k (2 k(\eta)+1) (k+k(\eta)+1) r_e^{4 k+2}\big]}{(2 k+1)^2 k(\eta) (k(\eta)+1)} \bigg) r^{-k-1} Y^k(\hat x), & \ |x|>q.
\end{cases}
\end{equation}
Using $\hat v_k$ in \eqref{eq:crc4}, we then set
\begin{equation}\label{eq:crc5}
v_{\eta, k}=\lambda_k\hat{v}_k,\qquad k\neq k(\eta),
\end{equation}
where
\begin{equation}\label{eq:crc6}
\lambda _{k}= \alpha_k
 \frac{(2 k+1) k(\eta) (k(\eta)+1)} {q^{k-1} r_e^{-2 k-1} \big((k-k(\eta)) (k+k(\eta)+1) r_e^{2 k+1}-k (k+1) (2 k(\eta)+1)^2\big)}.
\end{equation}
By straightforward calculations, though a bit tedious, one can verify that $v_{\eta, k}$ defined in \eqref{eq:crc6} satisfies \eqref{eq:crc2}. Next, we define
\[
\widetilde{V}_{k(\eta)}(x)=\begin{cases}
r^{k(\eta)} Y^{k(\eta)}(\hat x),\qquad &\quad |x|\leq q,\\
r^{-k(\eta)-1} q^{2k(\eta)+1} Y^{k(\eta)}(\hat x),\qquad &\quad |x|>q,
\end{cases}
\]
and set
\begin{equation}\label{eq:crc7}
v_{\eta,k(\eta)}=\lambda_{k(\eta)}\widetilde{V}_{k(\eta)},\qquad \lambda_{k(\eta)}=-\alpha_{k(\eta)}\frac{q^{1-k(\eta)}}{2k(\eta)+1}.
\end{equation}
It is directly verified that $v_{\eta, k(\eta)}$ satisfies \eqref{eq:crc3}. Finally, we set
\begin{equation} \label{eq:crc8}
\begin{split}
 -\Delta w_{\eta}&= -\nabla\cdot(\epsilon\nabla v_\eta)+f\\
  & = -\nabla \cdot (\epsilon \nabla v_{\eta, k(\eta)}) +\alpha_{k(\eta)} f_{k(\eta)}^q \\
 & = -\lambda_{k(\eta)}[\nu\cdot\epsilon\nabla\widetilde{V}_{k(\eta)}]\big|_{\partial B_1}\mathcal{H}^2\lfloor\partial B_1-\lambda_{k(\eta)}[\nu\cdot\epsilon\nabla \widetilde{V}_{k(\eta)}]\big|_{\partial B_{r_e}}\mathcal{H}^2\lfloor\partial B_{r_e}
\end{split}
\end{equation}
Clearly, $\Re v_\eta$ and $\Re w_\eta$ satisfy the PDE constraint in \eqref{eq:pv}, and hence by the primal variational principle,
\begin{equation}\label{eq:crc9}
E_\eta(u_\eta)\leq I_\eta(\Re v_\eta, \Re w_\eta)\leq I_\eta(v_\eta, w_\eta),
\end{equation}
where $I_\eta$ is defined in \eqref{eq:v1}.

We proceed to calculate the energy $I_\eta(v_\eta, w_\eta)$ in \eqref{eq:crc9} and show that it is bounded as $\eta\rightarrow +0$, which readily implies that ALR does not occur. First, by \eqref{eq:crc6}, one can verify that
\begin{equation}
\abs{\lambda_k} \leq C \abs{\alpha_k} \frac{\left(k(\eta)\right)^6}{q^k k}, \quad k\neq k(\eta).
\end{equation}
Hence we have the following estimate
\begin{equation} \label{eq:crc10}
\begin{split}
\eta \int_{\mathbb{R}^3}{ \abs{\nabla v_{\eta,k}}^2 } =& \eta  \abs{\lambda_k}^2 \int_{\mathbb{R}^3}{\abs{ \nabla \hat{v}_k}^2  }\\
\leq&  C \eta  \abs{\alpha_k}^2 \left(k(\eta)\right)^{12} \frac{1}{q^{2k} k^2} k q^{2k}
 = C \eta \abs{\alpha_k}^2 \frac{\left(k(\eta)\right)^{12} }{k}.
\end{split}
\end{equation}
Since $r_e^{-k(\eta)} \approx \eta$, we have $\eta \left(k(\eta)\right)^{12} \leq C$, which together with \eqref{eq:crc10} implies that
\begin{equation}\label{eq:crc11}
\eta  \sum_{k \neq k(\eta)}  \int_{\mathbb{R}^3}{  \abs{\nabla v_{\eta,k}}^2 } \leq  C \sum_{k \neq k(\eta)} \abs{\alpha_k}^2 \leq C\|F\|_{L^2(\partial B_q)}^2.
\end{equation}
By \eqref{eq:crc7}, one can also calculate that
\begin{equation}\label{eq:crc12}
 \eta \int_{\mathbb{R}^3}{ \abs{\nabla v_{\eta,k(\eta)}}^2 } \leq  C \eta \abs{\lambda_{k(\eta)}}^2 \int_{\mathbb{R}^3}{\abs{\nabla  \widetilde{V}_{k(\eta)} }^2  }
  \leq  C \eta \abs{\alpha_{k(\eta)}}^2
\end{equation}
Next we estimate the energy due to $w_{\eta}$, and by \eqref{eq:crc8} one has
\begin{equation}\label{eq:crc13}
 \frac 1 \eta\int_{\mathbb{R}^3}|\nabla w_\eta|^2\leq C\frac 1 \eta\|\nabla\cdot(\epsilon \nabla v_{\eta, k(\eta)})-\alpha_{k(\eta)} f_{k(\eta)}^q\|_{H^{-1}}^2\leq C\frac 1 \eta|\lambda_{k(\eta)}|^2 r_e^{2k(\eta)}k(\eta),
\end{equation}
where we have made use of the fact in \eqref{eq:crc8} that $ \nabla \cdot (\epsilon \nabla v_{\eta, k(\eta)}) -\alpha_{k(\eta)} f_{k(\eta)}^q$ is supported on $\abs{x}=1$
and $\abs{x}=r_e$.
Now by the choice of $\lambda_{k(\eta)}$ from (\ref{eq:crc7}), we have $\abs{\lambda_{k(\eta)}} \leq C \abs{\alpha_{k(\eta)}}  \left(k(\eta)\right)^{-1}q^{-k(\eta)}$, and hence
\begin{equation}\label{e10}
 \frac{1}{\eta} \int_{\mathbb{R}^3}{ \abs{\nabla w_{\eta}}^2 } \leq   \frac{C}{\eta} \abs{\alpha_{k(\eta)}}^2   (\frac{r_e}{q})^{2k(\eta)}
  \leq   C \abs{\alpha_{k(\eta)}}^2 (\frac{r_e^{ 3/2  }}{q})^{2k(\eta)},
\end{equation}
where the last inequality follows from the assumption that $ r_e^{-k(\eta)} \approx \eta$.

By combining \eqref{eq:crc11}, \eqref{eq:crc12} and \eqref{e10}, one readily sees that $I_\eta(v_\eta, w_\eta)$ remains bounded as $\eta\rightarrow +0$, and thus completes the proof.
\end{proof}

\medskip

\begin{proof}[Proof of Theorem~\ref{thm:sensitivity}]
The proof follows from a completely similar argument to that in the proof of Theorem~\ref{thm:crc}. Indeed, by the primal variational principle, one can construct test functions $w_\eta\equiv 0$, and $v_\eta$ satisfying $\nabla\cdot(\epsilon\nabla v_\eta)=f$ of the form \eqref{eq:crc1} with $v_{\eta,k}$ given in \eqref{eq:crc5} for all $k\in\mathbb{N}$. Then by similar estimates in deriving \eqref{eq:crc11}, one can show that $I_\eta(v_\eta, 0)$ remains bounded as $\eta\rightarrow +0$, and thus completes the proof.

\end{proof}

Throughout this section, we have mainly considered the resonance aspect, namely \eqref{eq:m3}, for the introduced plasmonic configurations. The nonoccurrence of ALR obviously implies the nonoccurrence of CALR; see Theorems~\ref{thm:nr1}, \ref{thm:sensitivity} and \ref{thm:crc}. However, for the resonance results derived in Theorems~\ref{thm:r1} and \ref{thm:r2}, in order to show the occurrence of CALR or not, one needs further derive the rate at which the energy blows up to infinity in terms of the asymptotic parameter $\eta$. It is also remarked that for our study in the present section, we have assumed that the loss parameter $\eta$ is given over the whole space $\mathbb{R}^3$; see \eqref{eq:d2}. It would be more practical to consider the case that the loss parameters presented inside and outside the cloaking device are different. We shall investigate those interesting issues in our future study. On the other hand, it is emphasized that in our study in the next section, i.e. Section~\ref{sect:3}, we shall actually consider CALR and the case that the loss parameter is only presented within the cloaking device.

\section{CALR in three dimensions}\label{sect:3}

In this section, we consider a plasmonic construction as follows
\begin{equation}\label{eq:np1}
\epsilon_\eta(x)=\begin{cases}
\epsilon_c,\qquad & |x|\leq r_i,\medskip\\
\epsilon_s+i\eta,\qquad & r_i<|x|\leq r_e,\medskip\\
\epsilon_m,\qquad & r_e<|x|,
\end{cases}
\end{equation}
with
\begin{equation}\label{eq:np2}
\epsilon_c=(1+\frac{1}{k_0})^2,\quad \epsilon_s=-1-\frac{1}{k_0},\quad\epsilon_m=1,
\end{equation}
where $k_0\in\mathbb{N}$ will be properly chosen in what follows. It is remarked that for the occurrence of CALR, $k_0$ will eventually depend on $\eta$ (cf. \eqref{eq:k0}). Our argument shall follow a similar spirit to that in \cite{Ack13}.

Throughout the present section, we let $R\in\mathbb{R}_+$ and $R>r_e$, and $f\in L^2(\mathbb{R}^3)$ be compactly supported in $\mathbb{R}^3\backslash\overline{B_R}$, satisfying \eqref{eq:m2}. It is remarked that compared to our earlier study in Section~\ref{sect:2}, we only assume loss in the plasmon layer, and the source considered would be more general. In the sequel, we let $F$ be defined by
\begin{equation}\label{eq:np3}
F(x)=\int_{\mathbb{R}^3} G(|x-y|) f(y)\ dy,\quad G(t)=-\frac{1}{4\pi t},\quad x\in\mathbb{R}^3,
\end{equation}
which is the Newtonian potential of $f$. Note that $\Delta F(x)=f(x)$, and hence $F(x)$ is harmonic in $r_e<|x|<R$, and it can be expressed as
\begin{equation}\label{eq:newtonpotential}
F(x)=\sum_{k=0}^{\infty}\sum_{l=-k}^{k} \beta_k^l \abs{x}^k Y_k^l(\hat{x}),\quad r_e<|x|<R.
\end{equation}
Then the solution $u_\eta$ to (\ref{eq:m1}) with $\epsilon_\eta$ in \eqref{eq:np1} can be expressed in $\abs{x} < R$ as follows,
\begin{equation}\label{eq:np4}
  u_\eta(x)=  \left \{
       \begin{array}{c c }
         \displaystyle{u_c(x)=\sum_{k=0}^{\infty}\sum_{l=-k}^{k} a_{k}^{l}\abs{x}^kY_k^l(\hat{x}) , }     & \abs{x} \leq r_i,\\
       \displaystyle{  u_s(x)=\sum_{k=0}^{\infty}\sum_{l=-k}^{k} (b_{k}^{l}\abs{x}^k + c_{k}^{l}\abs{x}^{-k-1} )Y_k^l(\hat{x}) , }               & r_i < \abs{x} \leq r_e, \\
       \displaystyle{  u_m(x)=\sum_{k=0}^{\infty}\sum_{l=-k}^{k} (e_{k}^{l}\abs{x}^k + d_{k}^{l}\abs{x}^{-k-1} )Y_k^l(\hat{x})  ,   }           & r_e < \abs{x} \leq R .\\
       \end{array}
       \right.
\end{equation}
Using the transmission conditions across the interfaces $|x|=r_i, r_e$ and $R$, respectively, one has by straightforward calculations (though a bit lengthy and tedious) that
\begin{equation}\label{eq:np5}
 a_k^l=a_k e_k^l, \quad b_k^l=b_k e_k^l, \quad c_k^l=c_k e_k^l, \quad d_k^l=d_k e_k^l
\end{equation}
with
\begin{align}
a_k=& \left[-(2k+1)^2 \epsilon_m( \epsilon_s+i \eta)\right ]\times \bigg[ \rho^{2k+1}(k^2+k)(( \epsilon_s+i \eta)-\epsilon_c)\times \nonumber\\
& (( \epsilon_s+i \eta)-\epsilon_m)-( (k+1)( \epsilon_s+i \eta) + k\epsilon_c ) ( (k+1)\epsilon_m + k( \epsilon_s+i \eta) )\bigg ]^{-1},\label{eq:coeffa}\\
b_k=& \left[ -\epsilon_m (2k+1)( (k+1)( \epsilon_s+i \eta) + k\epsilon_c) \right]\times \bigg[ \rho^{2k+1}(k^2+k)(( \epsilon_s+i \eta)-\epsilon_c)\times \nonumber \\
&(( \epsilon_s+i \eta)-\epsilon_m) - ( (k+1)( \epsilon_s+i \eta) + k\epsilon_c ) ( (k+1)\epsilon_m + k( \epsilon_s+i \eta) ) \bigg]^{-1},\label{eq:coeffb}\\
c_k=& \left [-r_i^{2k+1} \epsilon_m k(2k+1)(( \epsilon_s+i \eta)-\epsilon_c)\right ]\times \bigg[ \rho^{2k+1}(k^2+k)(( \epsilon_s+i \eta)-\epsilon_c)\times\nonumber \\
& (( \epsilon_s+i \eta)-\epsilon_m) - ( (k+1)( \epsilon_s+i \eta) + k\epsilon_c ) ( (k+1)\epsilon_m + k( \epsilon_s+i \eta) )\bigg]^{-1},\label{eq:coeffc}\\
d_k=& \bigg[kr_e^{2k+1}[ (\epsilon_m-( \epsilon_s+i \eta))( (k+1)( \epsilon_s+i \eta) +k\epsilon_c )+\nonumber\\
& \qquad\qquad \rho^{2k+1} (( \epsilon_s+i \eta) -\epsilon_c)(k\epsilon_m+(k+1)( \epsilon_s+i \eta)) ]\bigg]\times\nonumber\\
& \bigg[ \rho^{2k+1}(k^2+k)(( \epsilon_s+i \eta)-\epsilon_c) (( \epsilon_s+i \eta)-\epsilon_m)\nonumber \\
&- ( (k+1)( \epsilon_s+i \eta) + k\epsilon_c ) ( (k+1)\epsilon_m + k( \epsilon_s+i \eta) )\bigg]^{-1},\label{eq:coeffd}
\end{align}
where and also in what follows, $\rho=r_i/r_e$.
Since $u_\eta-F$ is harmonic in $\abs{x}> r_e$ and tends to $0$ as $\abs{x} \rightarrow +\infty$, one must have
\begin{equation}\label{eq:np6}
e_k^l=\beta_k^l.
\end{equation}
Hence the solution in the shell, namely $u_s$, is given by
\begin{equation}\label{eq:np7}
 u_s(x)=\sum_{k=0}^{\infty}\sum_{l=-k}^{k} \beta_k^l (b_{k}\abs{x}^k + c_{k}\abs{x}^{-k-1} )Y_k^l(\hat{x})
\end{equation}
Using Green's identity and the orthogonality of $Y_k^l$, we further obtain from \eqref{eq:np7} that
\begin{equation}\label{eq:np8}
\int_{r_i < \abs{x} < r_e} \abs{\nabla u_s(x)}^2 \approx \ \sum_{k=0}^{\infty}\sum_{l=-k}^{k} k \abs{\beta_k^l}^2 (\abs{b_k}^2 r_e^{2k} + \abs{c_k}^2 r_i^{-2k-1})
\end{equation}
Therefore,
\begin{equation}\label{eq:np9}
  E_{\eta} \approx \eta \sum_{k=0}^{\infty}\sum_{l=-k}^{k} k \abs{\beta_k^l}^2 (\abs{b_k}^2 r_e^{2k} + \abs{c_k}^2 r_i^{-2k-1}).
\end{equation}

We are ready to present the results on the occurrence and nonoccurrence of CALR. In what follows, similar to \eqref{eq:critical}, we set
\begin{equation}\label{eq:crs}
r_*=\sqrt{r_e^3/r_i}.
\end{equation}

\begin{theorem}\label{thm:s1}
Let $\epsilon_\eta$ be given in \eqref{eq:np1}-\eqref{eq:np2}, and let $f$ be as described above, supported in $R<|x|<r_*$ for some $R\in (r_e, r_*)$. Then there exists an infinite subsequence $\{n_j\}\subset\mathbb{N}$ which satisfies $n_j\rightarrow+\infty$ as $\eta\rightarrow+0$ such that as $k_0=n_j$, then
\begin{equation}\label{eq:np10}
E_\eta(u_\eta^j)\rightarrow +\infty\quad\mbox{as}\quad \eta\rightarrow+0,
\end{equation}
where $u_\eta^j$ denotes the solution corresponding to $n_j$. Moreover, $u_\eta^j(x)$ remains bounded for $|x|>r_*$, and hence CALR occurs.
\end{theorem}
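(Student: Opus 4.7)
The plan is to follow the spectral strategy of \cite{Ack13}, using the explicit series \eqref{eq:np4} with the coefficients \eqref{eq:coeffa}--\eqref{eq:coeffd} as the starting point. The first key step is an algebraic observation: the special choice $\epsilon_c=(1+1/k_0)^2$, $\epsilon_s=-1-1/k_0$, $\epsilon_m=1$ is designed so that, at $\eta=0$ and $k=k_0$, the two factors in the non-$\rho$ part of the common denominator vanish simultaneously, $(k_0+1)\epsilon_s+k_0\epsilon_c=0$ and $(k_0+1)\epsilon_m+k_0\epsilon_s=0$. A direct computation yields the identity
\[
((k+1)\epsilon_s+k\epsilon_c)\,((k+1)\epsilon_m+k\epsilon_s)=-\frac{(k_0+1)(k-k_0)^2}{k_0^{3}},
\]
so the denominator is minimized only at $k=k_0$, where to leading order in $\eta$
\[
\mathrm{Denom}_{k_0}\sim \rho^{2k_0+1}k_0(k_0+1)(\epsilon_s-\epsilon_c)(\epsilon_s-\epsilon_m)+\eta^{2}k_0(k_0+1).
\]
For every $k\neq k_0$ the second term dominates and Denom$_k$ stays bounded away from $0$ uniformly in $\eta$.

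The second step is to tune $k_0$ to $\eta$. I would pick $n_j=n_j(\eta)\in\mathbb N$ as the unique integer satisfying $\rho^{2n_j+1}\asymp \eta^{2}$; since $\rho=r_i/r_e<1$, $n_j\to\infty$ as $\eta\to 0$. With this balance, $|\mathrm{Denom}_{n_j}|\sim \eta^{2}n_j^{2}$. Inserting this estimate into \eqref{eq:coeffc} gives $|c_{n_j}|\sim r_i^{2n_j+1}n_j^{2}/(\eta^{2}n_j^{2})=r_i^{2n_j+1}/\eta^{2}$, and using $1/\eta^{2}\sim (r_e/r_i)^{2n_j+1}$ produces
\[
|c_{n_j}|^{2}r_i^{-2n_j-1}\sim r_e^{4n_j+2}/r_i^{2n_j+1}.
\]

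The third step is the energy lower bound. From \eqref{eq:np9}, keeping only the resonant $k=n_j$ term,
\[
E_\eta^{(n_j)}\gtrsim \eta\,n_j\sum_{l}|\beta_{n_j}^{l}|^{2}\,|c_{n_j}|^{2}r_i^{-2n_j-1}.
\]
Using $\eta\sim(r_i/r_e)^{n_j+1/2}$ and the lower bound $\sum_{l}|\beta_{n_j}^{l}|^{2}\gtrsim R^{-2n_j-2}$ along the appropriate subsequence (to be justified below), this reduces to a multiple of $n_j^{-1}(r_*^{2}/R^{2})^{n_j}$, which blows up since $R<r_*$.

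The last step is the cloaking estimate outside $B_{r_*}$. For this I would analyse $d_k$ from \eqref{eq:coeffd}. The non-resonant modes $k\neq n_j$ yield $|d_k|\lesssim r_e^{2k+1}kn_j/|k-n_j|$, so their contribution to the scattered field $u_\eta^{j}-F=\sum \beta_k^{l}d_k|x|^{-k-1}Y_k^{l}$ sums to a bounded quantity whenever $|x|>r_e^{2}/R$, which is implied by $|x|>r_*$. The resonant mode gives $|d_{n_j}|\sim r_e^{2n_j+1}/\eta$, hence the magnitude of its contribution at $x$ is $\sim n_j^{-1}(r_*^{2}/(R|x|))^{n_j}$; after division by $\sqrt{E_\eta^{j}}\sim n_j^{-1/2}(r_*/R)^{n_j}$ this becomes $n_j^{-1/2}(r_*/|x|)^{n_j}$, which tends to $0$ for every $|x|>r_*$. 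Combined with the non-resonant bound this yields $u_\eta^{j}/\sqrt{E_\eta^{j}}\to 0$ outside $B_{r_*}$, so CALR holds.

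The main obstacle I expect is the generic-source lower bound $\sum_{l}|\beta_{n_j}^{l}|^{2}\gtrsim R^{-2n_j-2}$ along an infinite subsequence. Since $f$ is not assumed to have nonvanishing high-frequency Fourier content, one has to extract the subsequence: $F$ is a nontrivial harmonic function in $B_R$ whose spherical-harmonic expansion \eqref{eq:newtonpotential} has radius of convergence exactly $R$ (its singular support lies on $\partial$\,supp\,$f$), and Parseval on $\partial B_R$ therefore forces $\sum_{l}|\beta_{k}^{l}|^{2}R^{2k+2}$ to be not summable, giving infinitely many $k$ with a lower bound of the required form. One must then verify that this infinite set of ``good'' indices is compatible with the tuning condition $\rho^{2n_j+1}\asymp\eta^{2}$, which is arranged by choosing the sequence $\eta\to 0^+$ so that the induced integer $n_j(\eta)$ falls in the good set.
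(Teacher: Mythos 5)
Your proposal follows essentially the same route as the paper's proof: the explicit mode-by-mode coefficients, the near-vanishing of the common denominator at $k=k_0$ engineered by the choice \eqref{eq:np2}, the tuning $\rho^{k_0}\approx\eta$, the energy lower bound retained from the resonant mode only, and the bounds on $d_k$ for the exterior estimate. The one step whose justification does not hold up as written is the claim that the expansion \eqref{eq:newtonpotential} having radius of convergence exactly $R$ forces $\sum_l|\beta_k^l|^2R^{2k+2}$ to be non-summable: coefficients of size $R^{-k}k^{-2}$ have radius of convergence exactly $R$ yet a convergent Parseval sum, so the pointwise lower bound $\sum_l|\beta_{n_j}^l|^2\gtrsim R^{-2n_j-2}$ is not available. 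This is easily repaired, and is what the paper does: non-extendability of $F$ into $B_{r_*}$ gives $\limsup_k\big(\sum_l|\beta_k^l|\big)^{1/k}>1/r_*$, hence infinitely many $k$ with $\sum_l|\beta_k^l|\geq\gamma^k$ for some $\gamma>1/r_*$, and $(r_*\gamma)^{2n_j}\rightarrow+\infty$ already yields the blow-up along that subsequence; the precise rate $R^{-2n_j}$ is not needed. On the exterior estimate, your division by $\sqrt{E_\eta}$ to handle the annulus $r_*<|x|<r_*^2/R$ (where raw boundedness of the resonant term is not immediate from $|d_{n_j}|\approx r_*^{2n_j}$ and $|\beta_{n_j}^l|\lesssim R^{-n_j}$) is a sound, and in fact slightly more careful, variant of the paper's argument, which establishes boundedness of $u_m-F$ for $|x|>r_e^2/r_i$.
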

\begin{proof}
Using \eqref{eq:np2}, we first have by straightforward calculations that
\begin{equation}\label{eq:np11}
\begin{split}
  &\bigg|\rho^{2k_0+1}(k_0^2+k_0)(( \epsilon_s+i \eta)-\epsilon_c) (( \epsilon_s+i \eta)-\epsilon_m)\\
  &\qquad\qquad - ( (k_0+1)( \epsilon_s+i \eta) + k\epsilon_c ) ( (k_0+1)\epsilon_m + k_0( \epsilon_s+i \eta) )\bigg| \\
  &  \approx k_0^2(\eta^2 + \rho^{2k_0}),
\end{split}
\end{equation}
which together with \eqref{eq:coeffa} and \eqref{eq:coeffb} yields that
\begin{equation}\label{eq:np12}
\abs{b_{k_0}} \approx  \frac{\eta} {\eta^2 + \rho^{2k_0}}, \quad   \abs{c_{k_0}} \approx  \frac{r_i^{2k_0}} {\eta^2 + \rho^{2k_0}}.
\end{equation}
By applying \eqref{eq:np12} to \eqref{eq:np9}, we then obtain
\begin{equation}\label{eq:np13}
 E_{\eta} \geq \sum_{m=-k_0}^{k_0} \frac{\eta k_0 r_e^{2k_0} \abs{\beta_{k_0}^m}^2 }{\eta^2 + \rho^{2k_0}}.
\end{equation}
Let $k_0$ be chosen such that
\begin{equation}\label{eq:k0}
 \rho^{k_0} <\eta \leq \rho^{k_0-1},
\end{equation}
 and hence
\begin{equation}\label{eq:np14}
 E_{\eta} \geq  Ck_0 \frac{r_e ^{3k_0}}{r_i^{k_0}} \sum_{l=-k_0}^{k_0} \abs{\beta_{k_0}^l}^2 \geq C\frac{r_e ^{3k_0}}{r_i^{k_0}} \frac{k_0}{2k_0+1}    (\sum_{l=-k_0}^{k_0} \abs{\beta_{k_0}^l} )^2   \geq C \frac{r_e ^{3k_0}}{r_i^{k_0}}  (\sum_{l=-k_0}^{k_0} \abs{\beta_{k_0}^l} )^2.
\end{equation}

Since the source $f$ is supported inside the critical radius $r_* = \sqrt{\frac{r_e ^{3}}{r_i}}$ and its
Newtonian potential $F$ cannot be harmonically extended  into $\abs{x} < r_*$, one can see that there holds
\begin{equation}\label{eq:np15}
 \limsup_{k \rightarrow \infty} (\sum_{l=-k}^{k} \abs{\beta_k^l} )^{1/k} > 1/\sqrt{\frac{r_e ^{3}}{r_i}},
\end{equation}
which together with \eqref{eq:np14} readily implies the existence of a subsequence $\{n_j\}\subset\mathbb{N}$ that fulfills the properties stated in the theorem.

Finally, we prove that $u_\eta$ is bounded in the region with $|x|>r_*$.
For $k_0$ in \eqref{eq:k0}, we have from \eqref{eq:coeffd} that
\begin{equation}\label{eq:coffd2}
 \abs{d_{k_0}} \approx \frac{r_e^{2k_0} \abs{i k_0 \eta +\rho^{2k_0}}}{k_0(\rho^{2k_0} + \eta^2)}  \leq C\frac{r_e^{3k_0}}{r_i^{k_0}},
\end{equation}
and for $k \neq k_0$,
\begin{equation}\label{eq:coffd2n}
  \abs{d_k}  \leq C k r_e^{2k} \frac{\abs{1-k/k_0} + \rho^{2k} }{k^2 \rho^{2k}  + \abs{1-k/k_0}^2 } \leq
  C k r_e^{2k} (\frac{1}{k \rho^k} + \frac{1}{k^2} ) \leq C\frac{r_e^{3k}}{r_i^{k}}.
\end{equation}
By using \eqref{eq:np4}, \eqref{eq:coffd2} and \eqref{eq:coffd2n}, one then calculates for $\abs{x} = r >r_e^2 r_i^{-1}$ that
\begin{equation}
 \abs{u_m(x) - F(x)} \leq C \sum_{k=0}^{\infty}\sum_{l=-k}^{k}  \abs{\beta_k^l} \frac{r_e^{3k}}{r_i^k} \abs{x}^{-k-1} < +\infty.
\end{equation}

The proof is complete.
\end{proof}

\medskip

Similar to Theorem~\ref{thm:r2}, using Theorem~\ref{thm:s1}, one can show that by using variable material parameters $\epsilon_c$ and $\epsilon_s$ in \eqref{eq:np2}, depending only on the asymptotic parameter $\eta$, CALR will occur. Next, we shall show the sensitivity and localization feature of the resonance, similar to Theorems~\ref{thm:sensitivity} and \ref{thm:crc}.

\medskip

\begin{theorem}\label{thm:s2}
Let $\epsilon_\eta$ be given in \eqref{eq:np1}-\eqref{eq:np2} with $k_0\in\mathbb{N}$ be any fixed integer, and let $f\in L^2(\mathbb{R}^3)$ be compactly supported in $|x|>R>r_e$ and satisfy \eqref{eq:m2}. Then ALR does not occur.
\end{theorem}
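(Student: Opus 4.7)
The plan is to work directly with the spectral representation \eqref{eq:np9} of the dissipated energy and to prove that, with $k_0$ fixed independently of $\eta$, the full series on the right-hand side is bounded by a constant depending only on $k_0$ and $F$, so that the overall prefactor $\eta$ drives $E_\eta\to 0$. The contrast with Theorem~\ref{thm:s1} is that there $k_0$ was allowed to drift with $\eta$ according to \eqref{eq:k0}, precisely in order to single out the unique mode where the denominator in \eqref{eq:coeffa}--\eqref{eq:coeffd} becomes small; freezing $k_0$ removes this resonance mechanism.

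The key calculation is a uniform lower bound on the denominator
\[
D_k(\eta):=\rho^{2k+1}k(k+1)(\epsilon_s+i\eta-\epsilon_c)(\epsilon_s+i\eta-\epsilon_m)-((k+1)(\epsilon_s+i\eta)+k\epsilon_c)((k+1)\epsilon_m+k(\epsilon_s+i\eta))
\]
common to \eqref{eq:coeffa}--\eqref{eq:coeffd}. Substituting the choices \eqref{eq:np2}, I would compute
\[
(k+1)\epsilon_s+k\epsilon_c=\frac{(1+1/k_0)(k-k_0)}{k_0},\qquad (k+1)\epsilon_m+k\epsilon_s=\frac{k_0-k}{k_0},
\]
together with $(\epsilon_s-\epsilon_c)(\epsilon_s-\epsilon_m)=(1+1/k_0)(2+1/k_0)^2$. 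These combine to give the $\eta$-free lower bound $|D_k(0)|\geq C(k_0)[k^2\rho^{2k+1}+(k-k_0)^2]$, and by continuity $|D_k(\eta)|\geq \tfrac12|D_k(0)|$ for all sufficiently small $\eta$. Crucially, the borderline case $k=k_0$ is fine because then the first summand alone yields $|D_{k_0}(0)|=C(k_0)\rho^{2k_0+1}>0$, a positive constant since $k_0$ is fixed.

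With this bound on $D_k$ I plug into \eqref{eq:coeffb}--\eqref{eq:coeffc} and extract coefficient estimates $|b_k|\leq C(k_0)$ and $|c_k|\leq C(k_0)\,r_i^{2k+1}$ uniformly in $k\neq k_0$ and in small $\eta$; for $k=k_0$ the numerator of $b_{k_0}$ equals $-\epsilon_m(2k_0+1)\cdot i(k_0+1)\eta$, so $|b_{k_0}|=O(\eta)$ while $|c_{k_0}|=O(1)$. Since $f$ is compactly supported outside $B_R$ with $R>r_e$, the Newtonian potential $F$ is harmonic inside $B_R$, and a standard Parseval argument on $\partial B_{R'}$ for any $r_e<R'<R$ gives $\sum_{l=-k}^k|\beta_k^l|^2\leq C_F(R')^{-2k}$. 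Substituting into \eqref{eq:np9}, this yields
\[
E_\eta\leq C(k_0,F)\,\eta\sum_{k=0}^\infty k\bigl[(r_e/R')^{2k}+(r_i/R')^{2k}\bigr],
\]
and the geometric series converges because $r_i<r_e<R'$. Hence $E_\eta=O(\eta)\to 0$ as $\eta\to +0$, and ALR does not occur.

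The main obstacle is establishing the uniform-in-$k$ lower bound on $|D_k|$. One has to verify it in three regimes: for small $k\neq k_0$ by inspection (where $\rho^{2k+1}$ is not small), for $k=k_0$ itself (where the second summand of $D_k$ vanishes at $\eta=0$ but the first is strictly positive), and for large $k$ (where the $(k-k_0)^2/k_0^2$ term dominates the now-tiny $\rho^{2k+1}$ term). Once that tripartite lower bound is in place, the remainder of the proof is a convergent geometric-series estimate in close parallel to the tail argument used in Theorem~\ref{thm:s1}, only now with $F$ harmonic inside a ball containing $\overline{B_{r_e}}$ rather than outside the inner sphere.
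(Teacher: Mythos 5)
Your proposal is correct and follows essentially the same route as the paper: both work from the mode expansion \eqref{eq:np9}, bound the coefficients $b_k,c_k$ uniformly via the explicit formulas \eqref{eq:coeffb}--\eqref{eq:coeffc} (the paper asserts $|b_k|\leq Ck^2$, $|c_k|\leq Ck^2 r_i^{2k}$ for $k\neq k_0$ and uses \eqref{eq:np12} for $k=k_0$), and conclude $E_\eta=O(\eta)$ from the geometric decay of $\beta_k^l$ coming from harmonicity of $F$ in $B_R$. Your write-up is in fact more careful than the paper's at the one delicate point, namely the uniform-in-$k$ lower bound on the common denominator $D_k(\eta)$ (your observation that both summands of $D_k(0)$ are positive after substituting \eqref{eq:np2}), which the paper leaves implicit.
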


\begin{proof}
By using \eqref{eq:np12}, one has
\begin{equation}\label{eq:eee1}
\begin{split}
& \sum_{l=-k_0}^{k_0} k_0 \abs{\beta_{k_0}^l}^2 (\abs{b_{k_0}}^2 r_e^{2k_0} + \abs{c_{k_0}}^2 r_i^{-2k_0-1})\\
\leq & C \sum_{l=-k_0}^{k_0} \frac{\eta k_0 r_e^{2k_0} \abs{\beta_{k_0}^l}^2 }{\eta^2 + \rho^{2k_0}} \leq
C \sum_{l=-k_0}^{k_0} \frac{ \eta k_0 r_e^{4k_0} \abs{\beta_{k_0}^l}^2 }{ r_i^{2 k_0}} \leq C\eta\|f\|_{L^2(\mathbb{R}^3)}^2.
\end{split}
\end{equation}
For $k\neq k_0$, we have by \eqref{eq:coeffb} and \eqref{eq:coeffc} that
\begin{equation}\label{eq:eee2}
|b_k|\leq C k^2\quad\mbox{and}\quad |c_k|\leq C k^2 r_i^{2k},
\end{equation}
and hence their contribution to $E_\eta$ is
\begin{equation}\label{eq:eee3}
\begin{split}
&\eta \sum_{k \neq k_0} \sum_{l=-k}^{k} k \abs{\beta_k^l}^2 (\abs{b_k}^2 r_e^{2k} + \abs{c_k}^2 r_i^{-2k-1})\\
 \leq &  C \eta \sum_{k \neq k_0} \sum_{l=-k}^{k}  k^3 \abs{\beta_k^l}^2  r_e^{2k} \leq C\eta\|f\|_{L^2(\mathbb{R}^3)}.
\end{split}
\end{equation}
The proof can be completed by combining \eqref{eq:np9}, \eqref{eq:eee2} and \eqref{eq:eee3}.
\end{proof}

\begin{theorem}\label{thm:s3}
Let $\epsilon_\eta$ be given in \eqref{eq:np1}-\eqref{eq:np2} with $k_0=k(\eta)\in\mathbb{N}$ chosen according to \eqref{eq:k0}, and let $f\in L^2(\mathbb{R}^3)$ be compactly supported in $|x|>r_*$ and satisfy \eqref{eq:m2}. Then ALR does not occur.
\end{theorem}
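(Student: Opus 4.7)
The plan is to adapt the spectral approach used in the proof of Theorem~\ref{thm:s2}, tracking carefully the $\eta$-dependence that now enters through $k_0=k(\eta)\to\infty$, and exploiting the stronger Fourier decay of the Newtonian potential afforded by the tighter support hypothesis on $f$.

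First, since $f$ is compactly supported in $\{|x|>r_*\}$, its Newtonian potential $F$ from \eqref{eq:np3} is harmonic in an open ball $\{|x|<R\}$ for some $R>r_*$, and the expansion \eqref{eq:newtonpotential} converges throughout this ball. Hence for any fixed $r$ with $r_*<r<R$ one obtains the crucial decay
\begin{equation*}
\sum_{l=-k}^{k}|\beta_k^l|^2 \leq C_r\, r^{-2k}, \qquad k\in\mathbb{N}.
\end{equation*}
Starting from representation \eqref{eq:np9}, I would isolate the resonant mode $k=k_0$. Using \eqref{eq:np12} together with the calibration $\rho^{k_0}<\eta\leq\rho^{k_0-1}$, which forces $\eta^2+\rho^{2k_0}\asymp\eta^2\asymp\rho^{2k_0}$, a short computation gives
\begin{equation*}
\eta\bigl(|b_{k_0}|^2\, r_e^{2k_0}+|c_{k_0}|^2\, r_i^{-2k_0-1}\bigr)\asymp \frac{r_e^{3k_0}}{r_i^{k_0}}=r_*^{2k_0}.
\end{equation*}
Combined with the Fourier decay above, the $k=k_0$ contribution is bounded by $Ck_0(r_*/r)^{2k_0}$, which tends to zero as $\eta\to +0$ because $r>r_*$.

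For the non-resonant modes $k\neq k_0$, I would plug $\epsilon_c=(1+1/k_0)^2$, $\epsilon_s=-1-1/k_0$ and $\epsilon_m=1$ into the denominator appearing in \eqref{eq:coeffa}--\eqref{eq:coeffc}. A direct expansion shows that its real part contains the positive terms $(k_0+1)(k-k_0)^2/k_0^3$ and $\rho^{2k+1}k(k+1)(k_0+1)(2k_0+1)^2/k_0^3$, giving the uniform lower bound $|D_k|\gtrsim (k-k_0)^2/k_0^2+\rho^{2k+1}k^2$ for $k\neq k_0$. I would then split the tail into a \emph{far} band $|k-k_0|\geq k_0/2$ and a \emph{near} band $1\leq|k-k_0|<k_0/2$. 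On the far band the denominator is uniformly bounded below, so the Theorem~\ref{thm:s2}-type estimates $|b_k|\leq Ck^2$, $|c_k|\leq Ck^2 r_i^{2k}$ hold, and the resulting contribution is dominated by $C\eta\sum_k k^5\bigl((r_e/r)^{2k}+(r_i/r)^{2k}\bigr)$, a bounded quantity that vanishes as $\eta\to +0$. On the near band, the cruder bound $|D_k|\gtrsim(k-k_0)^2/k_0^2$ gives $|b_k|^2 r_e^{2k}\lesssim k_0^6 r_e^{2k}/(k-k_0)^4$ and $|c_k|^2 r_i^{-2k-1}\lesssim k_0^8 r_i^{2k+1}/(k-k_0)^4$; after summing $\sum 1/(k-k_0)^4<\infty$ and using the Fourier decay together with $\eta\asymp\rho^{k_0}$, the near-band contribution is bounded by $Ck_0^{N}\bigl((r_ir_e/r^2)^{k_0}+(r_i^3/(r_er^2))^{k_0}\bigr)$ for some fixed $N\in\mathbb{N}$, which vanishes since both ratios are strictly less than one when $r>r_*$.

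The main obstacle lies in controlling the near band $1\leq|k-k_0|\lesssim k_0$, where $b_k$ and $c_k$ blow up algebraically in $k_0$ as $|k-k_0|\to 1$. The argument rests on absorbing this polynomial blow-up through the geometric decay $(r_e/r)^{2k}$, $(r_i/r)^{2k}$ supplied by the Newtonian potential estimate and the prefactor $\eta\asymp\rho^{k_0}$. The threshold $r>r_*=\sqrt{r_e^3/r_i}$ is precisely what turns each of the relevant ratios into a strict contraction, playing the same role here as the critical radius does in the resonance proof of Theorem~\ref{thm:s1}.
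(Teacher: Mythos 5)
Your proposal is correct and follows essentially the same route as the paper: bound $E_\eta$ mode by mode via \eqref{eq:np9}, treat the resonant index $k=k(\eta)$ with \eqref{eq:np12} and the calibration \eqref{eq:k0}, control the remaining modes through a lower bound on the common denominator in \eqref{eq:coeffa}--\eqref{eq:coeffc}, and conclude from the harmonic extendability of $F$ past $r_*$ (equivalently, the decay $\sum_l|\beta_k^l|^2\leq C_r r^{-2k}$ for some $r>r_*$). The only difference is cosmetic: the paper dispenses with your near/far band splitting by using the single uniform estimate $|b_k|\leq Ck^2k(\eta)^2$, $|c_k|\leq Ck^2k(\eta)^2r_i^{2k}$ for all $k\neq k(\eta)$ and absorbing the polynomial factors via $\eta\, k(\eta)^2\leq C$ and $k^2\leq C(r_e/r_i)^k$.
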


\begin{proof}
We calculate the energy $E_\eta$ given in \eqref{eq:np9}. For $k=k(\eta)$, similar to \eqref{eq:eee1}, along with the use of the fact that $\rho^{k(\eta)}\approx \eta$, one has
\begin{equation}\label{eq:eee4}
\begin{split}
& \sum_{l=-k(\eta)}^{k(\eta)} k(\eta) \abs{\beta_{k(\eta)}^l}^2 \big(\abs{b_{k(\eta)}}^2 r_e^{2k(\eta)} + \abs{c_{k(\eta)}}^2 r_i^{-2k(\eta)-1}\big)\\
\leq & C \sum_{l=-k(\eta)}^{k(\eta)} \frac{\eta k(\eta) r_e^{2k(\eta)} \abs{\beta_{k(\eta)}^l}^2 }{\eta^2 + \rho^{2k(\eta)}} \leq
C \sum_{l=-k(\eta)}^{k(\eta)} \frac{ k(\eta) r_e^{3k(\eta)} \abs{\beta_{k(\eta)}^l}^2 }{ r_i^{k(\eta)}}.
\end{split}
\end{equation}
For $k\neq k(\eta)$, by \eqref{eq:coeffb} and \eqref{eq:coeffc}, one has
\[
|b_k| \leq C k^2 k(\eta)^2\quad \mbox{and}  \quad |c_k| \leq C k^2 k(\eta)^2 r_i^{2k},
\]
and hence their contribution to $E_{\eta}$ is
\begin{equation}\label{eq:eee5}
\begin{split}
& \eta \sum_{k \neq k(\eta)} \sum_{l=-k}^{k} k \abs{\beta_k^l}^2 (\abs{b_k}^2 r_e^{2k} + \abs{c_k}^2 r_i^{-2k-1})\\
  \leq &  C \eta \sum_{k \neq k(\eta)} \sum_{l=-k}^{k}  k^3 k(\eta)^2 \abs{\beta_k^l}^2  r_e^{2k} \leq C \sum_{k \neq k(\eta)} \sum_{l=-k}^{k}  \frac{ k r_e^{3k} \abs{\beta_k^l}^2 }{ r_i^{k}},
\end{split}
\end{equation}
where in the last inequality we have made use of the fact that
\begin{equation}
k^2 < C \frac{r_e^k}{r_i^k}\quad\mbox{and}  \quad  k(\eta)^2  \eta <C.
\end{equation}
If the source function $f$ is supported outside the sphere of critical radius $r_* = \sqrt{\frac{r_e ^{3}}{r_i}}$, then
its Newtonian potential $F$ in \eqref{eq:newtonpotential} can be harmonically extended into $\abs{x}< r_* + 2\tau$ for a sufficiently small $\tau\in\mathbb{R}_+$. Hence, by combining \eqref{eq:eee4} and \eqref{eq:eee5}, we have
\begin{equation}
E_{\eta} \leq C  \sum_{k=0}^{\infty}\sum_{l=-k}^{k}  k \abs{\beta_k^l}^2  \frac{r_e^{3k}}{r_i^{k}} <
 C \|{f}\|_{L^2(\mathbb{R}^3)}^2 <+\infty
\end{equation}
which readily completes the proof.
\end{proof}

\section*{Acknowledgement}

The authors would like to express their gratitudes to the two anonymous referees for their insightful and constructive comments, which have led to significant improvement on the results and presentation of this work.
The work of H. Li was supported by the NSF of China, No.\,11101033. The work of J. Li was partially supported by the NSF of China, No.\,11201453 and the Shenzhen Research Program, No.\,JCYJ20140509143748226.
The work of H. Liu was supported by the FRG and start-up grants of Hong Kong Baptist University, and the NSF grant of China, No.\,11371115.

\end{document}